\renewcommand\normalsize{%
    \@setfontsize\normalsize{11.7}{14pt plus .3pt minus .3pt}%
    \abovedisplayskip 10\p@ \@plus4\p@ \@minus4\p@
    \abovedisplayshortskip 6\p@ \@plus2\p@
    \belowdisplayshortskip 6\p@ \@plus2\p@
    \belowdisplayskip \abovedisplayskip}
\renewcommand\small{%
    \@setfontsize\small{9.5}{12\p@ plus .2\p@ minus .2\p@}%
    \abovedisplayskip 8.5\p@ \@plus4\p@ \@minus1\p@
    \belowdisplayskip \abovedisplayskip
    \abovedisplayshortskip \abovedisplayskip
    \belowdisplayshortskip \abovedisplayskip}
\renewcommand\footnotesize{%
    \@setfontsize\footnotesize{8.5}{9.25\p@ plus .1pt minus .1pt}
    \abovedisplayskip 6\p@ \@plus4\p@ \@minus1\p@
    \belowdisplayskip \abovedisplayskip
    \abovedisplayshortskip \abovedisplayskip
    \belowdisplayshortskip \abovedisplayskip}
\definecolor{cerulean}{rgb}{0,.48,.65} 
\definecolor{dred}{rgb}{.5,0,0} 
\definecolor{dgreen}{rgb}{0,.3,0} 
\definecolor{vdred}{rgb}{.3,0,0} 
\definecolor{salmon}{rgb}{0.98,0.50,0.45} 
\definecolor{seagreen}{rgb}{0.13,0.70,0.67} 
\definecolor{chartreuse}{rgb}{0.40,0.80,0.00}
\definecolor{cornflower}{rgb}{0.39,0.58,0.93} 
\definecolor{gold}{rgb}{0.80,0.68,0.00}
\newtheorem{theorem}{Theorem}[section]
\newtheorem{proposition}[theorem]{Proposition}
\newtheorem{lemma}[theorem]{Lemma}
\newtheorem{corollary}[theorem] {Corollary}
\theoremstyle{remark}
\newtheorem{example}[theorem]{Example}
\theoremstyle{definition} 
\newtheorem{definition}[theorem]{Definition}
\newcommand{\ov}[1]{{\overline{#1}}}
\def\G{\Gamma} 
\def\<{\langle}
\def\>{\rangle} 
\def\Z{\mathbb{Z}}
\def\R{\mathbb{R}}
\def\N{\mathbb{N}}
\def\g{\gamma}
\def\im{{\rm{im}\, }}
\def\CL{{\rm{CL}}}
\newcommand{\set}[1]{\left\{#1\right\}}
\def\e{\epsilon}
\def\ee{\varepsilon_d}
\def\A{{\mathbb{A}}}
\def\Ad{{\mathbb{A}}_d}
\def\ni{\noindent}
\begin{document}

\title[Conjugators in the model filiform groups]{The lengths of conjugators \\ in the model filiform groups} 

\author{M.\ R.\ Bridson and T.\ R.\ Riley}
 
\date{29 May 2025; revised 9 January 2026}

\begin{abstract}
\ni 
The conjugator length function of a finitely generated group $\G$ gives the optimal upper bound on
the length of a shortest conjugator for any pair  of conjugate elements in the ball of radius $n$ in
the Cayley graph of $\G$.  We prove that polynomials of arbitrary degree arise as conjugator length functions of finitely presented groups.  To establish this, we analyse 
the geometry of conjugation in  
the discrete model filiform groups $\G_d = \Z^d\rtimes_\phi\Z$ where 
$\phi$ is the automorphism of $\Z^d$ that fixes the last element of a basis $a_1,\dots,a_d$ and sends $a_i$ to $a_ia_{i+1}$ for $i<d$. The conjugator length function of $\G_d$ is polynomial of degree $d$. 
  \medskip
\ni \footnotesize{\textbf{2020 Mathematics Subject Classification:  20F65, 20F10, 20F18}}  \\[3pt] 
\ni \footnotesize{\emph{Key words and phrases:} nilpotent groups, model filiform groups, conjugator length}
\end{abstract}

\thanks{The first author thanks the Mathematics Department of Stanford University for its hospitality. The second author gratefully acknowledges the financial support of the National Science Foundation (NSF GCR-2428489). ORCID: 0000-0002-0080-9059 (MRB)  and 0009-0004-3699-0322 (TRR)}

\maketitle

\section{Introduction}
Whereas Dehn (isoperimetric) functions are the most natural measure of the complexity of a direct approach
to the word problem in a finitely presented group,  {\em conjugator length functions} are the most
obvious measure of the complexity of a direct approach to the conjugacy problem in a finitely
generated group.  By definition,
$\CL_G(n)$ is the least integer $N$ such that whenever  a pair of words $u$ and $v$ of length at most $n$
in the generators   represent conjugate elements of $G$, there is a word $w$ of length at most $N$ such that  $uw=wv$ in $G$.  
The study of conjugator length functions is far less developed than the study of Dehn functions. 
In particular, whereas it is essentially known which functions arise as Dehn functions \cite{BORS}, 
the class of functions that arise as  conjugator length functions is poorly understood. Indeed, until
recently, very few sharp estimates on conjugator length functions had been established. In part, this
reflects the greater delicacy of the conjugacy problem: for example, there exist pairs of finitely generated
groups $H<G$ with $|G/H|=2$ such that $H$ has a solvable conjugacy problem and $G$ does not \cite{CollinsMiller}.
Such examples warn us that, in contrast to the study of Dehn functions, we cannot hope to understand conjugacy length functions using the techniques of coarse geometry. On the other hand, the conjugacy problem is 
fundamentally 
geometric in nature:   understanding conjugacy in a group $\G$ is closely tied to an understanding 
of the geometry of annuli with prescribed boundaries in any space with fundamental group $\G$. We
refer to our article with Andrew Sale \cite{BrRiSa} for a more detailed introduction to these ideas and a survey
of what is known about conjugator length functions. 

In this article 
we will advance the state of the art by showing that polynomials of arbitrary degree arise as
conjugator length functions of finitely presented groups.  
We shall do so by investigating the 
geometry of the conjugacy problem in the discrete model filiform groups $\G_d$.
By definition, 
$\G_d = \Z^d\rtimes_{\phi_d}\Z$, where, with respect to a fixed basis $a_1,\dots,a_d$ for $\Z^d$, the automorphism
$\phi_d$ fixes $a_d$ and maps $a_i\mapsto a_ia_{i+1}$ for $i=1,\dots,d-1$.  
\begin{equation*}  
\G_d = \< a_1, \dots, a_d, t \mid \forall 1\le i<j\le d,\ [a_i,a_j]=1=[t,a_d],\  t^{-1}a_it = a_ia_{i+1}\>.
\end{equation*}
The corresponding nilpotent
Lie groups $G_d=\R^d\rtimes_\phi\R$ have been  studied extensively in the context of Carnot geometry, while
the lattices $\G_d<G_d$ have served as key examples for nilpotent and polynomial 
phenomena in geometric group theory.
In particular, $(\G_d)_{d\in\N}$ was among the first families of groups used to prove Dehn functions can be polynomial
of arbitrary degree; see \cite{BMS,  BridsonPittet}. 
Here, we prove that the groups $(\G_d)_{d\in\N}$ illustrate
the same diversity of behaviour among conjugator length functions.
In a companion article \cite{BrRi1}, we will show that 2-step nilpotent groups can also have conjugator length 
functions of  arbitrary polynomial degree.  A polynomial upper bound on the conjugator length functions of  
arbitrary finitely generated nilpotent groups was established much earlier \cite{MMNV}. 

\begin{theorem}\label{t:main-poly}
The conjugator length function of $\G_d$ is polynomial of degree $d$.
\end{theorem}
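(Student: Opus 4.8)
The plan is to work with the semidirect-product description $\Gamma_d=\Z^d\rtimes_\phi\Z$, writing an element as a pair $(v,m)$ with $v\in\Z^d$ and $m\in\Z$, and to combine a few explicit calculations with the coarse word-length estimate $\|(v,m)\|_{\Gamma_d}\asymp |m|+\sum_{j=1}^{d}|v_j|^{1/j}$, where $v_j$ denotes the $a_j$-coordinate of $v$. This is the only metric input needed; it reflects the fact that $\Gamma_d$ is graded nilpotent with $a_j$ of weight $j$, so in particular $\|a_j^{\,s}\|\asymp|s|^{1/j}$ and the retraction $\Gamma_d\to\Z$, $(v,m)\mapsto m$, is $1$-Lipschitz. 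The algebraic input is the conjugacy dictionary: conjugating $(v,m)$ by $(c,k)$ produces $\bigl(\phi^{\pm k}(v)+(I-\phi^{\pm m})(c),\,m\bigr)$ (signs depending on the chosen convention), so conjugate elements share the same $\Z$-coordinate $m$ and, comparing first coordinates (here $\phi$ is unipotent with $\phi(e_1)=e_1+e_2$), the same $a_1$-coordinate; and, granting these, $(a,m)\sim(b,m)$ exactly when $b\equiv\phi^{k}(a)\pmod{\im(I-\phi^{m})}$ for some $k\in\Z$, in which case any $c$ with $(I-\phi^{m})(c)=b-\phi^{k}(a)$ yields a conjugator $(c,k)$ (using $\im(I-\phi^{m})=\im(I-\phi^{-m})$).

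For the lower bound I take $u=a_{d-1}a_d^{\,M}$ and $v=a_{d-1}$, choosing $M\asymp n^{d}$ so that $\|u\|\le n$; this is possible because $\|a_d^{\,M}\|\asymp M^{1/d}$. Since $\phi$ fixes $a_d$ and $\phi(a_{d-1})=a_{d-1}a_d$, a power of $t$ conjugates $v$ to $u$, so $u\sim v$. The point is that the conjugator is forced to be long: conjugating $(e_{d-1},0)$ by any $(c,k)$ gives $(\phi^{\pm k}(e_{d-1}),0)=(e_{d-1}\pm k\,e_d,0)$, which equals $(e_{d-1}+M\,e_d,0)=u$ only if $|k|=M$; hence every conjugator has $\Z$-coordinate of absolute value $M$ and therefore length $\ge M\asymp n^{d}$. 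This gives $\CL_{\Gamma_d}(n)\succeq n^{d}$.

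For the upper bound, let $u=(a,m)$ and $v=(b,m)$ be conjugate with $\|u\|,\|v\|\le n$, so $|m|\le n$, $|a_j|,|b_j|\lesssim n^{j}$, and $a_1=b_1$. If $m=0$ then $b=\phi^{k}(a)$; unless $a$ is a power of $a_d$ (then $u=v$ and there is nothing to do), the least index $i^{*}$ with $a_{i^{*}}\ne 0$ satisfies $i^{*}<d$, the exponent $k$ is unique, and comparing $a_{i^{*}+1}$-coordinates gives $k=(b_{i^{*}+1}-a_{i^{*}+1})/a_{i^{*}}$, whence $|k|\le|b_{i^{*}+1}|+|a_{i^{*}+1}|\lesssim n^{\,i^{*}+1}\le n^{d}$ and $t^{k}$ is a conjugator of length $\lesssim n^{d}$. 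If $m\ne 0$, the obstruction is that $\im(I-\phi^{m})$ is only a finite-index subgroup of $\{v:v_1=0\}$: the cokernel of $I-\phi^{m}$ is $\Z$ times a finite group $F_m$ whose order divides $m^{d-1}$, and $\phi$ descends to an automorphism of this cokernel under which $u\sim v$ forces the images of $a$ and $b$ into one orbit. Since $\phi$ acts trivially modulo $F_m$ (matching $a_1=b_1$) and by an affine automorphism on $F_m$, that orbit has size at most $|F_m|\le|m|^{d-1}\lesssim n^{d-1}$, so some $k$ with $|k|\lesssim n^{d-1}$ satisfies $b\equiv\phi^{k}(a)\pmod{\im(I-\phi^{m})}$. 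Solving the triangular system $(I-\phi^{m})(c)=b-\phi^{k}(a)$ for $c\in\Z^{d}$ (taking the free last coordinate to be $0$) and propagating the bounds $|a_j|,|b_j|\lesssim n^{j}$, $|k|\lesssim n^{d-1}$, $|m|\le n$ through the recursion gives $|c_j|\lesssim n^{(d-1)j+1}$, hence $|c_j|^{1/j}\lesssim n^{d}$; thus $(c,k)$ is a conjugator of length $\lesssim n^{d}$. Combining the three cases gives $\CL_{\Gamma_d}(n)\preceq n^{d}$, which with the lower bound proves the theorem.

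I expect the main obstacle to be the $m\ne 0$ half of the upper bound. It requires a clean handle on $\mathrm{coker}(I-\phi^{m})$ — that its torsion has order dividing $m^{d-1}$ and that $\phi$ acts on it through a low-complexity (affine) map — in order to bound the $\Z$-exponent of a conjugator by $n^{d-1}$, and then a careful run of the size estimates for the triangular linear system, since the bound is genuinely tight: it is the coordinate $c_1$, with $|c_1|\lesssim n^{d}$, that produces the exponent $d$, so any slack in the earlier estimates would be fatal. (If the box estimate $\|(v,m)\|\asymp|m|+\sum_j|v_j|^{1/j}$ is not supplied by an earlier section, establishing it — via iterated-commutator identities for $\|a_j^{\,s}\|\gtrsim|s|^{1/j}$ and explicit short words for the reverse inequality — is a further, routine ingredient.)
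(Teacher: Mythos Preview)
Your proposal is correct and leads to a complete proof, but it takes a genuinely different route from the paper's.

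\textbf{Where the approaches agree.} The lower bound is identical: both you and the paper exhibit the pair $a_{d-1}$ and $a_{d-1}a_d^{M}$ with $M\asymp n^{d}$ and observe that any conjugator must have $t$--exponent $\pm M$. Both arguments also rely on the distance estimate $\|(v,m)\|\asymp |m|+\sum_j|v_j|^{1/j}$, which is exactly what Lemma~\ref{l:normal-form} and Proposition~\ref{p:whole-ball} establish.

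\textbf{Where they diverge.} For the upper bound the paper proceeds by induction on $d$: a conjugator in $\Gamma_{d-1}$ of length $\lesssim n^{d-1}$ is lifted to $\Gamma_d$, producing an error $a_d^{\ell}$ whose exponent is controlled by the Dehn function of $\Gamma_{d-1}$ (so $|\ell|\lesssim n^{d(d-1)}$); correcting this error requires the structural analysis of centralisers (Section~\ref{s:cent}), lengths of roots (Section~\ref{s:roots}), and a B\'ezout argument (Section~\ref{s:bezout}). Your argument bypasses all of this. In the generic case $m\neq 0$ you read off the conjugacy invariant directly as the $\phi$--orbit of $\bar a$ in $\mathrm{coker}(I-\phi^{m})\cong\Z\times F_m$; since $\phi$ acts trivially on the free part and $|F_m|=|m|^{d-1}\le n^{d-1}$, a representative $k$ with $0\le k<n^{d-1}$ exists. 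Then you solve the lower--triangular system $(I-\phi^{m})c=b-\phi^{k}a$, and a short induction on coordinates gives $|c_j|\lesssim n^{(d-1)j+1}$, hence $|c_j|^{1/j}\lesssim n^{d}$. This is correct; the decisive term is indeed $|c_1|\lesssim n^{d}$, coming from $|k|\,|a_1|\lesssim n^{d-1}\cdot n$, and for $d\ge 3$ the cross--terms in the recursion are dominated by $|w_j|$ (while for $d=2$ the recursion has a single equation).

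\textbf{What each approach buys.} Your argument is shorter and more elementary: it needs no Dehn--function input, no centraliser or root analysis, and produces the conjugator in one step. The paper's inductive approach, on the other hand, is more structural --- the analysis of centralisers and roots is of independent interest and the method is less tied to having an explicit matrix model, so it transports more readily to other nilpotent families (as the authors exploit in their companion paper on $2$--step groups).
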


Theorem~\ref{t:main-poly} provides a counterpoint to our work with Andrew Sale on the conjugator length functions of  free-by-cyclic groups: in  \cite{BrRiSa2} we proved that $\CL(n) \simeq n$ for
the  group $F_d\rtimes\Z$ obtained by removing 
the relations $[a_i,a_j]=1$ from the presentation of $\G_d$. It also contrasts with Sale's result \cite[Theorem~5.4]{Sale3} that if $\theta$ is   diagonalisable over $\R$ with positive eigenvalues,  then $\Z^d\rtimes_\theta \Z$ has $\CL(n) \simeq n$.

\bigskip

\ni{\bf{An outline of the proof.}} Whenever one tries to understand a conjugacy problem in a group $\G$, one is inevitably led to study the structure
of centralisers in $\G$, because the set of solutions $x$ to an equation $x^{-1}\g_1x=\g_2$ is a coset of 
the centraliser $C(\g_1)$. Thus a feature of our study is that we will need a close understanding of the structure of centralisers in 
$\G_d$; this is described in Section~\ref{s:cent}. It will emerge from this analysis of centralisers
 that we also need tight
control on the geometry of roots in $\G_d$, by which we mean solutions to equations $x^p=\gamma$; this is
the subject of Section~\ref{s:roots}. With these tools in hand, we will prove Theorem~\ref{t:main-poly} by
means of an induction on $d$ that exploits the observation that the centre of $\G_d$ is infinite cyclic, 
generated by $a_d$, and the quotient of $\G_d$  by its centre
is isomorphic to $\G_{d-1}$, with the images of $t$ and $a_i\ (i<d)$ satisfying the defining  relations of $\G_{d-1}$. When working with this inductive structure, it is important to keep in mind
that if two words $u,v$ in the letters $t,a_1,\dots,a_{d-1}$ define the same element in $\G_{d-1}$,
they will in general not be equal in $\G_d$; rather, an equality of the form $u=va_d^r$ will hold in $\G_d$.
The first lesson to be taken from this observation is that we must be careful to specify in which group equalities
between words are taking place. The second point to absorb is that we will have reason to control $|r|$ in 
expressions of the form $u=va_d^r$.  

With these preparatory thoughts in mind, we can outline our strategy of proof. The actual proof requires
us to keep track of various constants, but for the purposes of this outline it makes sense to absorb these
into symbols such as $\preceq$.

The lower bound $n^d\preceq \CL_{\G_d}(n)$ is established (Section~\ref{s:normal forms}) by arguing that $t^r$ is the unique shortest
word conjugating $a_{d-1}$ to $a_{d-1}a_d^r$ in $\G_d$ and that $d(1,\, a_{d-1}a_d^{n^d}) \simeq n$.  En route we explain how the Hilbert-Waring Theorem facilitates a short combinatorial treatment of the metric geometry of $\G_d$.

The argument to establish that $\CL_{\G_d}(n)\preceq n^d$ is more involved. 
We must show that if $u$ and $v$ are conjugate in $\G_d$ and
both lie in the ball of radius $n$ about $1$, then there exists $x$ such that  $x^{-1}ux=v$  and $|x|\preceq n^d$.  
We will see that the difficult case to deal with is when $v$ does not lie in the normal
subgroup $\Z^d\rtimes 1$, so let us assume here that this is the case.
Assuming that the theorem has been proved for $\G_{d-1}$, we get a word $\theta$ with length $|\theta|\preceq n^{d-1}$
in the letters $t, a_1,\dots, a_{d-1}$ conjugating the images of $u$ and $v$ in $\G_{d-1}$. 
This tells us that  $\theta^{-1}u\theta=va_d^r$ in $\G_d$ for some integer $r$.
 We will use our knowledge of the Dehn function in $\G_{d-1}$ to ensure that $|r|\preceq n^{d(d-1)}$.
 To complete the induction, we must  
 argue that if $va_d^r$ is conjugate to $v$ and $|r| \preceq n^{d(d-1)}$, then the conjugacy can be achieved
 by a conjugator $h$ with $d(1,h) \preceq n^d$. (Note that $va_d^r$ may lie outside the ball of radius $n$
 in $\G_d$, which complicates the structure of the argument.)
 
 It transpires that it is best to realise the conjugacy of $v$ to $va_d^r$ in two steps. In the first step,
we conjugate by a suitable power of 
 $a_{d-1}$: the aim here is to reduce $|r|$
 to something of significantly smaller order by conjugating by $a_{d-1}^M$ with $M\approx n^{d(d-1)}$, and
 since the distortion of $\<a_{d-1}\><\G_d$ is polynomial of degree $d-1$,  the length of such a conjugator is
roughly $n^d$, which is what we are aiming for. We will see that, under the constraint 
$M\approx n^{d(d-1)}$, we
can reduce $|r|$ to something less than $pqe<ne$ where $p$ is the
maximal power such that the image of $v$ in $\G_{d-1}$ is an element that can be written in the form $\ov{v}=v_0^pa_{d-1}^*$
and $q$ is the exponent sum of $t$ in $v_0$,  and $e = \gcd(p,q)$. A surprisingly large amount 
of work is required for the final step in the proof, wherein we must bound the length of a conjugator
taking $v$ to $va_d^r$ for small values of $r$: 
this involves understanding the action on the coset $v\<a_d\>$ in $\G_d$
of the centraliser of $\ov{v}\in\G_{d-1}$   through  a
homomorphism $\zeta_{\ov{v}}: C(\ov{v})\to \Z$ defined in Definition~\ref{d:def}.  Our understanding 
of this action relies on  an estimate  $d(1,v_0)\preceq d(1,v)$ that comes from the
analysis in Section~\ref{s:roots}  of the lengths of roots in $\G_{d-1}$.

\subsection*{Acknowledgement}   We thank the referees for their careful reading and helpful comments.

\section{Preliminaries}

Throughout,  $\G_d$ will denote the group $\Z^d\rtimes_{\phi_d}\Z$ where, with respect to
a fixed basis, $\phi_d$ and $\phi^{-1}_d$ are represented by the following matrices, respectively:
$$  \left( 
 \begin{array}{cccccc}
  1 &  &  &  &  \\
  1 & 1 &  &  &  \\
   & 1 & \ddots &  &  \\
   &  &   \ddots & 1  &  \\
   &  &   & 1 & 1 \\
\end{array}%
\right) \quad \text{ and  } \quad     \left(%
 \begin{array}{cccccc}
  1 &  &  &  &  \\
  -1 & 1 &  &  &  \\
   1  & -1 &  \ddots &  &  \\
   -1  & 1 &  \ddots & 1  &  \\
   \vdots &   \vdots &  \ddots & -1 & 1 \\
\end{array} 
\right).$$ When there is no danger of ambiguity, we write $\phi$ instead of $\phi_d$.
We work  with  the standard presentation
\begin{equation}\label{Gd}
\G_d = \< a_1,\dots,a_d,t\mid \forall 1\le i<j\le d,\ [a_i,a_j]=1=[t,a_d],\  t^{-1}a_it = a_ia_{i+1}\>.
\end{equation}
All distances in $\G_d$ will be taken with respect to  the generating set of this presentation and the area 
of any null-homotopic word will be calculated with respect to this set of defining relations. It is worth noting,
however, that $\G_d$ is generated by just two elements, $a_1$ and $t$.

We are interested in the  \emph{conjugator length function} $\CL : \N \to \N$ of these groups.
By definition, $\CL(n)$ is the minimal integer $N$ such that whenever  a pair of words  $u$ and $v$ 
of length at most $n$ in the generators
represent conjugate elements in $\G_d$, 
there is a word $w$ of length at most $N$ such that  $uw=wv$ in $\G_d$.     
If we worked with a different finite generating set, the
resulting conjugator length function would be equivalent
 in the sense of the following standard definition.  
 \smallskip
 
 \ni{\em For functions $f$ and $g$ mapping $\mathbb{N} \to \mathbb{N}$ write $f \preceq g$ when there exists $C>0$ such that   $f(n) \leq Cg(  Cn+C ) +C$  for all $n \in \N$, and write $f  \simeq g$ when $f \preceq g$ and $g \preceq f$. }

The following standard lemma will be needed   to control 
the size of the discrepancy $r$ that arises when we lift an equality $u=v$ in   $\G_{d-1}$ to
an equality of the form $u=va_d^r$  in $\G_d$. 

\begin{lemma}\label{l:dehn} If two words $u,v$ in the letters $t,a_1,\dots,a_{d-1}$ define the same element of $\G_{d-1}$, then $u=va_d^r$ in $\G_d$, where  $|r|\le {\rm{Area}}_{\G_{d-1}}(uv^{-1})$.
\end{lemma}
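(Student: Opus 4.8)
The plan is to compare the two presentations at the level of van Kampen diagrams, exploiting the fact that $a_d$ is central in $\G_d$. There is an evident homomorphism $\pi\colon\G_d\to\G_{d-1}$ killing $a_d$ and fixing $t,a_1,\dots,a_{d-1}$, and $\<a_d\>\le\ker\pi$; this is what forces an equality $u=v$ in $\G_{d-1}$ to lift to one of the form $u=va_d^r$ in $\G_d$. Knowing only that $uv^{-1}$ lies in $\ker\pi$ gives no control on $r$, however, so to pin down $|r|$ I would go back to how the relation $uv^{-1}=1$ is witnessed over the standard presentation of $\G_{d-1}$.

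Concretely: suppose $u=v$ in $\G_{d-1}$, put $w=uv^{-1}$ and $A={\rm{Area}}_{\G_{d-1}}(w)$. By the definition of area there is, in the free group on $t,a_1,\dots,a_{d-1}$, an identity
\[
 w \;=\; \prod_{k=1}^{A} x_k\,\rho_k^{\,\varepsilon_k}\,x_k^{-1},
\]
with each $\rho_k$ a defining relator of $\G_{d-1}$ and each $\varepsilon_k=\pm1$. I would then simply reinterpret this identity in $\G_d$. Every defining relator of $\G_{d-1}$ other than $[t,a_{d-1}]$ — that is, each $[a_i,a_j]$ with $1\le i<j\le d-1$ and each $t^{-1}a_ita_{i+1}^{-1}a_i^{-1}$ with $1\le i\le d-2$ — is also a defining relator of $\G_d$, hence trivial there; whereas $[t,a_{d-1}]$ evaluates in $\G_d$ to $a_d^{\pm1}$, since $t^{-1}a_{d-1}t=a_{d-1}a_d$ in $\G_d$. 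So, read in $\G_d$, the identity above becomes $w=\prod_{k=1}^{A}x_k a_d^{\delta_k}x_k^{-1}$, where $\delta_k=\pm\varepsilon_k$ when $\rho_k=[t,a_{d-1}]$ and $\delta_k=0$ otherwise.

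To finish, I would use once more that $a_d$ is central in $\G_d$: each factor $x_k a_d^{\delta_k}x_k^{-1}$ collapses to $a_d^{\delta_k}$, so $w=a_d^r$ in $\G_d$ with $r=\sum_k\delta_k$, and multiplying on the right by $v$ (again using centrality of $a_d$) yields $u=va_d^r$; finally $|r|\le\#\{k:\rho_k=[t,a_{d-1}]\}\le A$. I do not expect a genuine obstacle here — the one point that needs care is the observation that, among the defining relators of $\G_{d-1}$, it is exactly $[t,a_{d-1}]$ that fails in $\G_d$, and that it fails by precisely one power of the \emph{central} generator $a_d$; this is what lets the crude area count convert directly into the bound on $|r|$, with the conjugating words $x_k$ playing no role.
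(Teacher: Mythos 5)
Your proof is correct and is essentially the paper's argument: both start from a minimal-area product-of-conjugates expression for $uv^{-1}$ in the free group and count how many factors use the relator $[t,a_{d-1}]$, which is the unique defining relator of $\G_{d-1}$ that fails in $\G_d$, and there fails by exactly one power of the central generator $a_d$. The only cosmetic difference is that you evaluate directly in $\G_d$ and use centrality to collapse the conjugates, whereas the paper routes through the auxiliary group $F\times\langle a_d\rangle$; the counting step and the resulting bound $|r|\le A$ are identical.
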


\begin{proof}
By definition,  ${\rm{Area}}_{\G_{d-1}}(uv^{-1})$ is the least integer $A$ for which there is an equality 
$$
uv^{-1} = \prod_{i=1}^A \theta_i^{-1} \rho_i \theta_i
$$
in the free group $F=F(t,a_1,\dots,a_{{d-1}})$
with each $\rho_i^{\pm 1}$ one of the defining relations of $\G_{d-1}$. We modify the righthand side of this
expression by replacing each instance of $[a_{d-1}, t]$ with $[a_{d-1}, t]a_d$, leaving the other relations
untouched. Let $\rho_i'$ be the resulting relations. Then, in $F\times \<a_d\>$ we have the equality
$$
uv^{-1} =  a_d^r\prod_{i=1}^A \theta_i ^{-1}\rho_i' \theta_i,
$$
where $-r$ is the number of letters $a_d$ added during the editing of the $\rho_i$. The  righthand side has image
$a_d^r$
 under the surjection $F\times\< a_d\>\to \G_d$ that is implicit in the notation. Thus $u=va_d^r$ in $\G_d$, with
$|r|\le A$.
\end{proof}

The following well-known result 
will be needed in connection with the above lemma.

\begin{theorem}[\cite{BridsonPittet}]\label{t:dehn}
The Dehn function of $\G_d$ is $\simeq n^{d+1}$.
\end{theorem}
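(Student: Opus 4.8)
The plan is to establish the two inequalities $\delta_{\G_d}(n)\preceq n^{d+1}$ and $n^{d+1}\preceq\delta_{\G_d}(n)$ separately. For the upper bound I would simply observe that $\G_d$ is nilpotent of class exactly $d$ — its lower central series being $\gamma_k(\G_d)=\langle a_k,\dots,a_d\rangle$, so that $\gamma_d(\G_d)=\langle a_d\rangle$ and $\gamma_{d+1}(\G_d)=1$ — and invoke the theorem of Gromov, given a complete proof by Gersten, Holt and Riley, that a finitely generated nilpotent group of class $c$ has Dehn function $\preceq n^{c+1}$. One could instead try to run an induction based on the central extension $1\to\langle a_d\rangle\to\G_d\to\G_{d-1}\to1$ together with Lemma~\ref{l:dehn}, but this requires care: transporting past long subwords the $a_d$-letters that the lifting procedure introduces is precisely what accounts for the extra factor of $n$, and the resulting bookkeeping is more delicate than it first appears.

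The substance is the lower bound, and for this I would exhibit an explicit, highly distorted element. Working in $\G_{d+1}$, consider the left-normed iterated commutator
$$
\sigma_n \ :=\ \bigl[\,a_1^{\,n},\ \underbrace{t^{\,n},\ t^{\,n},\ \dots,\ t^{\,n}}_{d}\,\bigr],
$$
a word in the two letters $a_1,t$ with $|\sigma_n|\le(3\cdot2^{d}-2)\,n$. Reading off $t^{-n}a_it^{n}=a_i\,a_{i+1}^{\,n}\,a_{i+2}^{\,\binom{n}{2}}\cdots a_{d+1}^{\,\binom{n}{d+1-i}}$ from the matrix of $\phi^{n}$, I would prove by induction on the number of commutator brackets that $\sigma_n=a_{d+1}^{\,n^{d+1}}$ in $\G_{d+1}$: after forming $k$ brackets the partial commutator lies in $\langle a_{k+1},\dots,a_{d+1}\rangle$, and its lowest-index exponent is exactly $n$ times the lowest-index exponent at stage $k-1$ while every other contribution telescopes away; since the process starts from the exponent $n$ of $a_1^{\,n}$, the successive lowest-index exponents run $n,n^2,\dots,n^{d+1}$, and after $d$ brackets only $a_{d+1}$ survives.

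With this identity in hand the lower bound follows quickly. The quotient map $\G_{d+1}\to\G_d$ kills $a_{d+1}$ and fixes $a_1,t$, hence carries the word $\sigma_n$ to the identical word, which therefore represents $1$ in $\G_d$; so $\mathrm{Area}_{\G_d}(\sigma_n)\le\delta_{\G_d}\bigl((3\cdot2^{d}-2)\,n\bigr)$. Applying Lemma~\ref{l:dehn} with $d+1$ in place of $d$, to the words $\sigma_n$ and the empty word (which do represent the same element of $\G_d$), gives $\sigma_n=a_{d+1}^{\,r}$ in $\G_{d+1}$ with $|r|\le\mathrm{Area}_{\G_d}(\sigma_n)$. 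Since $a_{d+1}$ has infinite order in $\G_{d+1}$ and $\sigma_n=a_{d+1}^{\,n^{d+1}}$, necessarily $r=n^{d+1}$, so $n^{d+1}\le\delta_{\G_d}\bigl((3\cdot2^{d}-2)\,n\bigr)$ for all $n$, i.e.\ $n^{d+1}\preceq\delta_{\G_d}(n)$. For $d=1$ this reproduces the classical $\delta_{\Z^2}\simeq n^2$, via $[a_1^{\,n},t^{\,n}]=a_2^{\,n^2}$ in $\G_2$, so no separate base case is needed.

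The one genuinely computational ingredient is the identity $\sigma_n=a_{d+1}^{\,n^{d+1}}$, and the only real obstacle is keeping the binomial-coefficient bookkeeping straight in its proof — although even a crude version giving merely $\sigma_n=a_{d+1}^{\,m}$ with $|m|\asymp n^{d+1}$ would already suffice for the conclusion. Everything else is immediate from Lemma~\ref{l:dehn}.
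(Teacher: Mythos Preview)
The paper does not prove this theorem; it simply quotes it from \cite{BridsonPittet} as a known input and uses it later (in the generic case of Proposition~\ref{p:upper}) to bound the discrepancy~$\ell$.  So there is no proof in the paper to compare against.

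Your sketch is correct.  The upper bound via the Gersten--Holt--Riley theorem for nilpotent groups of class~$c$ is legitimate once you have checked (as you do) that $\gamma_k(\G_d)=\langle a_k,\dots,a_d\rangle$, so the class is exactly~$d$.  For the lower bound, your iterated-commutator words $\sigma_n$ are precisely the classical witnesses used in \cite{BridsonPittet} and related papers: the recurrence ``lowest-index exponent gets multiplied by~$n$ at each stage'' is valid because when you pass from $w_{k-1}=a_k^{c_k}a_{k+1}^{c_{k+1}}\cdots$ to $[w_{k-1},t^n]$, the $a_{k+1}$-exponent becomes $c_k\binom{n}{1}+c_{k+1}-c_{k+1}=c_k n$, independently of the higher-index garbage.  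At the final stage only $a_{d+1}$ survives, and the same telescoping shows the exponent is exactly $n^{d+1}$, not merely $\asymp n^{d+1}$.  Your use of Lemma~\ref{l:dehn} (with $d+1$ in place of~$d$) to convert this into an area lower bound for $\G_d$ is exactly how that lemma is meant to be deployed, and your length estimate $|\sigma_n|=(3\cdot 2^d-2)n$ is right.

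In short: you have reconstructed, correctly, the argument of the cited reference; the paper itself treats the result as a black box.
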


Positive powers of the matrices for $\phi$ and $\phi^{-1}$ can be calculated using the binomial theorem,
writing each as $I+N$ and expanding $(I+N)^n$.
For $\phi$, the entry in column $i$ on the bottom row for $\phi^n$ 
is $\e(i,n)=\binom{n}{d-i}$, while the corresponding entry $\e(i,-n)$ for $\phi^{-n}$ is a signed sum on
binomial coefficients.  
The following crude estimate will suffice for our purposes.

\begin{lemma}\label{l:entry}
There is a constant $\ee$ such that $|\e(i,m)| \le \ee\, |m|^{d-i}$ for all $m\in\Z$ and $i=1,\dots,d-1$.
\end{lemma}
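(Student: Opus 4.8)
The plan is to recognise $\e(i,m)$ as a single (generalised) binomial coefficient, viewed as a polynomial in $m$, and then invoke the elementary fact that a polynomial of degree $k$ vanishing at $0$ is dominated by a constant times $|m|^k$. I would begin by writing $\phi = I + N$, where $N$ is the $d\times d$ matrix with $1$'s immediately below the diagonal and $0$'s elsewhere; then $N^d=0$, so the generalised binomial series for $(I+N)^m$ terminates and one has
\[
\phi^m = (I+N)^m = \sum_{k=0}^{d-1}\binom{m}{k}N^k \qquad (m\in\Z),
\]
where $\binom{m}{k}:=m(m-1)\cdots(m-k+1)/k!$. For $m\ge 0$ this is the ordinary binomial theorem; for $m<0$ it follows by raising $(I+N)^{-1}=\sum_{k\ge 0}(-1)^kN^k$ to the power $|m|$, and the signs built into $\binom{m}{k}$ for negative $m$ reproduce exactly the "signed sum of binomial coefficients" referred to above. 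Since the matrix $N^k$ has its nonzero entries precisely on the $k$-th subdiagonal, its only nonzero bottom-row entry sits in column $d-k$; hence the only summand contributing to $\e(i,m)$ is the one with $d-k=i$, i.e.\ $k=d-i$, and so $\e(i,m)=\binom{m}{d-i}$ for every $m\in\Z$, with no dependence on the sign of $m$.

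It then remains to bound $|\binom{m}{d-i}|$. For each $i\in\{1,\dots,d-1\}$ the expression $P_i(m):=\binom{m}{d-i}$ is a polynomial in $m$ of degree $d-i\ge 1$ with zero constant term, say $P_i(m)=\sum_{j=1}^{d-i}c_{i,j}m^j$. For $|m|\ge 1$ we get $|P_i(m)|\le\big(\sum_{j=1}^{d-i}|c_{i,j}|\big)\,|m|^{d-i}$, while $P_i(0)=0=|0|^{d-i}$. So it suffices to take $\ee:=\max_{1\le i\le d-1}\sum_{j=1}^{d-i}|c_{i,j}|$, and then $|\e(i,m)|\le\ee\,|m|^{d-i}$ for all $m\in\Z$ and all $i=1,\dots,d-1$, as claimed.

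There is no serious obstacle here: the lemma is a deliberately crude estimate. The only point worth a moment's care is to obtain the identity $\e(i,m)=\binom{m}{d-i}$ for positive and negative $m$ simultaneously, which is why it is cleaner to work throughout with the generalised binomial coefficient — a genuine polynomial in $m$ — than to manipulate the explicit signed sums arising from the displayed matrix for $\phi^{-1}$.
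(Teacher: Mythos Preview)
Your proof is correct and follows exactly the line the paper indicates: the paper states the lemma without proof, merely noting beforehand that $\phi^m=(I+N)^m$ expands binomially and that the bottom-row entries are the coefficients $\binom{m}{d-i}$ (for $m>0$) or signed sums thereof (for $m<0$). Your use of the generalised binomial coefficient $\binom{m}{k}=m(m-1)\cdots(m-k+1)/k!$ to treat both signs at once is a tidy way to carry out what the paper leaves implicit, and the polynomial bound is the obvious finishing step.
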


The significance for us of the  integers $\e(i,n)$ is that for $i=1,\dots,d-1$, 
if $\phi_{d-1}^n(a_i)\in \G_{d-1}$ and $\phi_{d}^n(a_i)\in \G_{d}$ are written as words in 
the letters $a_1,\dots,a_d$, then in $\G_d$ we have
\begin{equation}\label{e:error}
\phi_{d}^n(a_i) = \phi_{d-1}^n(a_i) \, a_d^{\e(i,n)}.
\end{equation}

\section{Normal forms and distance} \label{s:normal forms}

The essential result for us concerning navigating $\G_d$ metrically is:  

\begin{theorem}\label{t:dist-cyclics}
For $i=1,\dots,d$, the function $n\mapsto d_{\G_d}(1,a_i^n)$ is Lipschitz equivalent to $n^{1/i}$.
\end{theorem}

This theorem played an important role in \cite{Bridson3} where it was proved with an argument from \cite{Gui, gromov}. This argument takes place  in the Malcev completion of $\G_d$, which is the nilpotent Lie group $G_d=\R^d\rtimes\R$ where $\G_d$ is a lattice. The Lie algebra of $G_d$, which is graded, has presentation
$$
( X_1,..., X_d, T : [X_i,X_j]=0=[X_d,T],\ [T,X_i]=X_{i-1},\  i=1,...,d-1 ).
$$
This Lie algebra is generated under brackets by $\{X_1, T\}$, so there is a Carnot-Caratheodory metric on $G_d$: this  
is defined by fixing an inner product on the plane $\<X_1, T\>$ so that $\{X_1, T\}$ is orthonormal, transporting this plane around $G_d$ by left translation, then defining the distance $d_{cc}(x,y)$ between two points $x,y\in G_d$ to be the infimum of the lengths of piecewise
smooth curves from $x$ to $y$ that are almost everywhere tangent to this field of 2-planes, with the length of tangent vectors measured using the transported inner product. 

The Lie algebra of $G_d$ admits the 1-parameter family of automorphisms $m_s: X_i \mapsto s^i.X_i,\ T\mapsto s.T$ for $s\neq 0$.
These automorphisms induce (via the exponential map) homotheties of the Carnot-Caratheodory metric 
on $G_d$.  For all $s\in\mathbb{R}$ we have $d_{cc}(1, \exp(sX_i)) =  s^{1/i}$. The restriction of $d_{cc}$
to the lattice $\G_d$ is Lipschitz equivalent to the word metric on $\G_d$ and $a_i^n= \exp(nX_i)$.

\bigskip

Here we give an alternative, more combinatorial proof of Theorem 3.1. This proof invokes 
Hilbert's solution to Waring's problem, but is otherwise elementary.  Lemma~\ref{l:normal-form} and Proposition~\ref{p:whole-ball} below will establish the complementary upper and lower bounds for the theorem.

We will push the letters $t^{\pm 1}$ to the left when writing elements
of $\G_d$ in the standard normal form for polycyclic groups.

\begin{lemma}\label{l:normal-form}
Every element $g\in \G_d$ can be expressed uniquely as a word in normal form $t^r a_1^{p_1}a_2^{p_2}\cdots a_d^{p_d}$,
and if $d(1,g)\le n$ then $|r|\le n$ and $|p_i|\le n^i$ for $i=1, \ldots, d$. 
\end{lemma}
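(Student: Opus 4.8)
The plan is to establish existence and uniqueness of the normal form first, and then to extract the quantitative bounds on the exponents by tracking the effect of each generator as we reduce an arbitrary word to normal form. Existence and uniqueness are standard facts about polycyclic presentations: since $\G_d = \Z^d\rtimes_\phi\Z$, every element is uniquely $t^r v$ with $r\in\Z$ and $v\in\Z^d$, and $v$ is uniquely $a_1^{p_1}\cdots a_d^{p_d}$ because $a_1,\dots,a_d$ is a basis of $\Z^d$. The content of the lemma is therefore the length estimates, and for those I would argue by a rewriting process.

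First I would fix a word $w$ of length $\le n$ representing $g$ and describe an algorithm that moves all $t^{\pm1}$ to the left. The exponent sum of $t$ is a homomorphism $\G_d\to\Z$, so whatever normal form we reach, its $t$-exponent $r$ equals the signed count of $t$-letters in $w$; hence $|r|\le n$ immediately. For the $a_i$-exponents, the key mechanism is the relation $t^{-1}a_it=a_ia_{i+1}$ (and its inverse), together with Lemma~\ref{l:entry}: when a letter $a_i$ is carried past a block $t^{\pm m}$ during the leftward sweep, it is replaced by $\phi^{\pm m}(a_i)$, whose $j$-th exponent is the binomial-type integer $\e(i,\mp m)$ bounded by $\ee\,|m|^{d-i}\le\ee\, n^{d-i}$ by Lemma~\ref{l:entry}. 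Since $w$ has at most $n$ letters of the form $a_i^{\pm1}$, and each contributes to the $a_j$-exponent (for $j\ge i$) an amount bounded by $\ee\, n^{d-i}\le \ee\, n^{d-j}\cdot n^{\,j-i}\le \ee\, n^{j-1}$ at worst — I would be more careful here — summing over the at most $n$ letters gives $|p_j|\preceq n^{j}$, which after absorbing $\ee$ into the constant implicit in the statement (or by a slightly more generous induction on $d$, passing to $\G_{d-1}$ via the central quotient by $\langle a_d\rangle$ and invoking \eqref{e:error} to control the discrepancy) yields $|p_j|\le n^j$.

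Concretely, I expect the cleanest route is induction on $d$. For $d=1$ the group is $\Z^2$ and the claim is trivial. For the inductive step, the central quotient $\G_d/\langle a_d\rangle\cong\G_{d-1}$ lets us apply the inductive hypothesis to the image $\bar g$, obtaining a normal form $t^r a_1^{p_1}\cdots a_{d-1}^{p_{d-1}}$ in $\G_{d-1}$ with $|r|\le n$ and $|p_i|\le n^i$ for $i<d$; lifting gives $g = t^r a_1^{p_1}\cdots a_{d-1}^{p_{d-1}} a_d^{p_d}$ in $\G_d$, so only the bound $|p_d|\le n^d$ remains. To get it, I would note that $p_d$ is determined by the reduction process applied directly in $\G_d$: rewriting $w$ to normal form, the only source of $a_d$-letters is carrying some $a_i^{\pm1}$ past a power $t^{\pm m}$ with $|m|\le n$, contributing $\e(i,\mp m)$ copies of $a_d$ with $|\e(i,\mp m)|\le \ee\, n^{d-i}\le \ee\, n^{d-1}$; with at most $n$ such letters, $|p_d|\le \ee\, n^{d}$, and a harmless adjustment of the generating set (or a sharper count) removes the constant.

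The main obstacle is bookkeeping: a single carry operation $a_i \mapsto \phi^{\pm m}(a_i)$ introduces not just $a_d$-letters but also new $a_j$-letters for $i<j<d$, and those in turn must be carried further left in later steps, so one must check that this cascade does not blow the estimates past $n^j$. I would handle this by ordering the reduction so that we clear $t$-letters from right to left and, within the resulting positive word in $a_1,\dots,a_d$, collect exponents by index; because $\phi$ is unipotent lower-triangular, the cascade is finite and the total $a_j$-exponent is a sum of at most $\mathrm{poly}(n)$ binomial coefficients each of size $\le \ee\, n^{d-i}$, which the induction on $d$ packages cleanly. I do not expect to need anything beyond Lemma~\ref{l:entry} and the semidirect-product structure.
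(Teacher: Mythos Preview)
Your route is genuinely different from the paper's and is correct up to a multiplicative constant, but it does not deliver the sharp inequality $|p_i|\le n^i$ that the lemma actually asserts.

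The paper inducts on the word length $n$, not on $d$. Given $b_1\cdots b_n\beta$ of length $n+1$, one assumes the prefix is already in normal form $t^r a_1^{p_1}\cdots a_d^{p_d}$ with $|p_i|\le n^i$ and checks the effect of right-multiplying by the single letter $\beta$: if $\beta=a_j^{\pm1}$ only $p_j$ moves by $1$; if $\beta=t$ then $p_i\mapsto p_i+p_{i-1}$, and $n^i+n^{i-1}<(n+1)^i$; if $\beta=t^{-1}$ then $p_i$ becomes the alternating sum $(-1)^{i+1}(p_1-p_2+\cdots+(-1)^{i+1}p_i)$, whose absolute value is at most $\sum_{j\le i}n^j<(n+1)^i$. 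This gives the constant-free bound in one stroke and never invokes Lemma~\ref{l:entry}.

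Your induction on $d$ together with Lemma~\ref{l:entry} yields $|p_d|\le\ee\, n^d$, not $|p_d|\le n^d$, and the induction then propagates such constants. Your proposed fixes do not close this gap: changing the generating set changes the meaning of $d(1,g)\le n$ and hence the statement being proved, and ``a sharper count'' is exactly what the paper's induction on $n$ supplies, so it is not a free move. That said, every later use of this lemma in the paper is insensitive to constants (everything is up to $\simeq$), so your weaker conclusion would be perfectly adequate for the applications. Regarding the cascade you flag: it is an artifact of rewriting step by step and disappears if you use the closed formula $v=\sum_k \epsilon_k\,\phi^{s_k}(e_{i_k})$, the sum running over the $a$-letters $a_{i_k}^{\epsilon_k}$ of $w$ with $s_k$ the $t$-exponent of the suffix of $w$ after position $k$; then each original letter contributes exactly once, $|s_k|\le n$, and no iteration occurs.
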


\begin{proof} The only non-trivial assertion is the one about the size of the exponents. We proceed
by induction on $n$. 
Given a word $w=b_1\cdots b_n\beta$ of length $n+1$, we apply the inductive hypothesis to write
$w'=b_1\cdots b_n$ in normal form, then shuffle $\beta$ past the letters of this normal form to write $w$ in normal
form. If $\beta=a_j^{\pm 1}$  then this shuffling leaves $r$ and $p_i$ unchanged for $i\neq j$ and alters $p_j$
by $\pm 1$. If $\beta=t$ then this shuffling leaves $p_1$ unchanged
and for $i\ge 2$ increases  $p_i$ by $p_{i-1}$, so the  observation 
$n^i+n^{i-1} < (n+1)^i = n^i + in^{i-1}+\cdots $ finishes the induction. 
If $\beta=t^{-1}$
then the shuffling replaces $p_i$ by  
$$
(-1)^{i+1} (p_1 - p_2 +\dots + (-1)^{i+1}p_{i}),
$$
whose absolute value is, by induction, less than  $n + n^2 +\dots + n^{i}$,
which is less than $(n+1)^i$.
\end{proof}

Our next result provides a converse to the preceding lemma. 

\begin{proposition} \label{p:whole-ball}
There is a constant $C_d {\geq 1}$ such that for every $g\in\G_d$ with normal form 
$t^ra_1^{p_1}a_2^{p_2}\dots a_d^{p_d}$, where
  $|r|\le n$ and $|p_i|\le n^i$, we have  $d(1,g)\le C_d\, n$.
\end{proposition}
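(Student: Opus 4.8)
The plan is to induct on $d$, building the element $g = t^r a_1^{p_1}\cdots a_d^{p_d}$ from a shorter element of $\G_{d-1}$ together with a controlled power of the central generator $a_d$. Concretely, write $g = \bar{g}\, a_d^{p_d}$ where $\bar{g} = t^r a_1^{p_1}\cdots a_{d-1}^{p_{d-1}}$, and regard $\bar{g}$ as (the image in $\G_d$ of) an element of $\G_{d-1}$ by using the words in the letters $t, a_1,\dots,a_{d-1}$; here one must be a little careful, since a word of length $\ell$ representing $\bar g$ in $\G_{d-1}$ need not represent $\bar g$ in $\G_d$ — rather it represents $\bar g\, a_d^s$ for some $s$, with $|s|$ controlled by Lemma~\ref{l:dehn} and the Dehn function bound of Theorem~\ref{t:dehn}. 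So the first step is: by the inductive form of this proposition in $\G_{d-1}$, there is a word $w_0$ of length $\le C_{d-1}\, n$ in $t, a_1,\dots,a_{d-1}$ with $w_0 = \bar g$ in $\G_{d-1}$, hence $w_0 = \bar g\, a_d^{s}$ in $\G_d$ with $|s|\le {\rm{Area}}_{\G_{d-1}}(w_0\,\bar g^{-1})$.

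The second step is to bound $|s|$. Both $w_0$ and (a normal-form word for) $\bar g$ have length $\preceq n$ in $\G_{d-1}$, so $w_0 \bar g^{-1}$ is a null-homotopic word of length $\preceq n$ in $\G_{d-1}$, and Theorem~\ref{t:dehn} gives ${\rm{Area}}_{\G_{d-1}}(w_0\bar g^{-1}) \preceq n^{d}$; thus $|s|\preceq n^{d}$. Then $g = \bar g\, a_d^{p_d} = w_0\, a_d^{p_d - s}$ in $\G_d$, and $|p_d - s|\le |p_d| + |s| \le n^d + (\text{const})\,n^d \preceq n^d$. Now $a_d$ is central in $\G_d$, and I claim $d(1, a_d^{k}) \le K_d\, |k|^{1/d}$ for a suitable constant $K_d$: this is exactly the statement that the cyclic subgroup $\langle a_d\rangle$ has distortion $\preceq n^d$ in $\G_d$, which is realised concretely by $t^{-m} a_1 t^{m} = a_1 \cdot(\text{word involving } a_2,\dots,a_d) $, iterated — more precisely, conjugating $a_{d-1}$ by $t^m$ produces $a_{d-1} a_d^{m}$ (the $i=d-1$ case of the matrix for $\phi^m$, with bottom-row entry $\e(d-1,m)=\binom{m}{1}=m$), so $a_d^{m} = a_{d-1}^{-1} t^{-m} a_{d-1} t^{m}$ has length $\le 2m+2$; chaining this through the subgroups $\langle a_{d-2}\rangle, \dots$ one gets $d(1,a_d^{k})\preceq |k|^{1/d}$. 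Applying this with $k = p_d - s$, $|k|\preceq n^d$, gives $d(1, a_d^{p_d-s}) \preceq n$.

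Combining, $d(1,g) \le |w_0| + d(1, a_d^{p_d - s}) \le C_{d-1}\,n + (\text{const})\, n \le C_d\, n$, which is the desired bound; this closes the induction once the base case $d=1$ is checked, where $\G_1 = \Z^2$ and the statement is immediate with $C_1 = 1$ (or any constant $\ge 1$). I expect the main obstacle to be the bookkeeping in the second step: one must pin down cleanly that $w_0\bar g^{-1}$ really is a word of length $O(n)$ in $\G_{d-1}$ so that Theorem~\ref{t:dehn} applies (this needs the distance bound of Lemma~\ref{l:normal-form} to control $|\bar g|_{\G_{d-1}}$, which in turn needs $|p_i|\le n^i$ only for $i\le d-1$ — fine, since that is part of the hypothesis), and that the distortion estimate $d(1,a_d^k)\preceq |k|^{1/d}$ is genuinely uniform in $d$ with an explicit constant, so that the $C_d$ obtained is a genuine constant and not itself growing with $n$. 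Everything else is routine: normal-form shuffling, the binomial estimate of Lemma~\ref{l:entry}, and summing two $O(n)$ terms.
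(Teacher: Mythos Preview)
Your overall architecture matches the paper's: induct on $d$, write $g = \bar g\, a_d^{p_d}$ with $\bar g = t^r a_1^{p_1}\cdots a_{d-1}^{p_{d-1}}$, use the inductive hypothesis in $\G_{d-1}$ to get a word $w_0$ of length $\le C_{d-1}n$ with $w_0=\bar g$ in $\G_{d-1}$, hence $w_0=\bar g\,a_d^{s}$ in $\G_d$, then bound $|s|$ and finish with the distortion of $\langle a_d\rangle$. The distortion statement $d(1,a_d^k)\preceq |k|^{1/d}$ is exactly Lemma~\ref{l:real}, and the paper proves it by the same inductive commutator idea you sketch, together with Hilbert--Waring to handle $k$ that are not perfect $d$-th powers.

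The genuine gap is in your ``second step'', where you bound $|s|$ via Lemma~\ref{l:dehn} and Theorem~\ref{t:dehn}. You assert that $w_0\bar g^{-1}$ is a null-homotopic \emph{word} of length $\preceq n$ in $\G_{d-1}$, but here $\bar g$ must be the specific normal-form word $t^r a_1^{p_1}\cdots a_{d-1}^{p_{d-1}}$, whose word-length is $|r|+|p_1|+\cdots+|p_{d-1}|\sim n^{d-1}$, not $n$. (Your appeal to Lemma~\ref{l:normal-form} here goes the wrong way: that lemma bounds exponents given a short word, not the other way round. The inductive hypothesis does give $d_{\G_{d-1}}(1,\bar g)\le C_{d-1}n$, but that is a statement about the \emph{element}; any geodesic word $v'$ realising it differs from the normal-form word $\bar g$ in $\G_d$ by yet another unknown power of $a_d$, so you are back where you started.) With the honest word-length $\sim n^{d-1}$, the Dehn function bound gives only $|s|\preceq n^{d(d-1)}$, which is far too large.

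The repair is immediate and is exactly what the paper does: bypass the Dehn-function route entirely and apply Lemma~\ref{l:normal-form} \emph{in $\G_d$} to the word $w_0$. Since $|w_0|\le C_{d-1}n$ and the normal form of $w_0$ in $\G_d$ is precisely $t^r a_1^{p_1}\cdots a_{d-1}^{p_{d-1}}a_d^{s}$, Lemma~\ref{l:normal-form} gives $|s|\le |w_0|^d\le C_{d-1}^d\,n^d$. Then $|p_d-s|\le (1+C_{d-1}^d)n^d$, Lemma~\ref{l:real} gives $d(1,a_d^{p_d-s})\le D_d(1+C_{d-1}^d)n$, and the triangle inequality closes the induction with $C_d = C_{d-1}+D_d(1+C_{d-1}^d)$.
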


The following special case of the proposition contains the main idea in the proof; it also
facilitates a simple induction to prove the general case. 

\begin{lemma} \label{l:real}
For every $d\ge 1$ there is a constant $D_d$ such that for  all positive integers $0<p\le n^d$
we have $d(1,a_d^p) \le D_d\, n$ in $\G_d$.
\end{lemma}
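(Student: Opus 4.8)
\medskip

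The plan is to induct on $d$. For $d=1$ the group $\G_1 = \Z^2$ and $a_1^p$ literally has word length $|p| \le n$, so $D_1 = 1$ works. For the inductive step, suppose the result holds in $\G_{d-1}$ with constant $D_{d-1}$, and fix $p$ with $0 < p \le n^d$. The key mechanism is the relation $t^{-1}a_{d-1}t = a_{d-1}a_d$ in $\G_d$, which lets us trade a long power of $a_d$ for a short conjugating prefix $t^{\pm k}$ at the cost of a power of $a_{d-1}$: more precisely, for any $m$ we have $t^{-m}a_{d-1}t^m = a_{d-1}a_d^{\e(d-1,m)}$ by \eqref{e:error}, and since $\e(d-1,m) = \binom{m}{1} = m$, this reads $t^{-m}a_{d-1}^{s}t^m = a_{d-1}^{s}a_d^{sm}$. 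So $a_d^{sm}$ is conjugate by $t^m$ to $a_{d-1}^{-s}(t^{-m}a_{d-1}^{s}t^m)$, giving
$$
a_d^{sm} = t^{-m}\,a_{d-1}^{-s}\,t^{m}\,a_{d-1}^{s}\,t^{-m}\,\cdot\, t^{m},
$$
wait — more usefully, $a_d^{sm} = a_{d-1}^{-s}\,t^{-m}a_{d-1}^{s}t^{m}$. Now the idea is to write $p$ (after adjusting by a bounded amount) as a product $p = q \cdot n + r_0$ with $|q| \le n^{d-1}$ and $0 \le r_0 < n$, i.e. a base-$n$ type splitting: take $m = n$ and $s = q$, so that $a_d^{qn} = a_{d-1}^{-q}\,t^{-n}a_{d-1}^{q}t^{n}$, and then handle the residual $a_d^{r_0}$ trivially (its word length is $r_0 < n$).

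\medskip

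Estimating the resulting word length: the expression $a_{d-1}^{-q}\,t^{-n}a_{d-1}^{q}t^{n}\,a_d^{r_0}$ has length at most $2\,d(1, a_{d-1}^{q}) + 2n + n$. Since $|q| \le n^{d-1}$, the inductive hypothesis applied in $\G_{d-1}$ gives $d(1, a_{d-1}^{q}) \le D_{d-1}\,n$ in $\G_{d-1}$; and because $\G_{d-1}$ embeds in $\G_d$ (as the subgroup on $t, a_1, \dots, a_{d-1}$, which is a retract — or, more carefully, because any word in $t, a_1,\dots,a_{d-1}$ representing $a_{d-1}^q$ in $\G_{d-1}$ represents $a_{d-1}^q a_d^{*}$ in $\G_d$ by Lemma~\ref{l:dehn}, and we can absorb the bounded error by enlarging the constant, or better: we should rephrase the induction so that what we actually prove is a bound on $d_{\G_d}(1, a_d^p)$ directly, chaining through an auxiliary claim that $d_{\G_d}(1, a_{d-1}^q) \le D'_{d-1}\,n$ for $|q|\le n^{d-1}$, established by the same splitting one dimension down). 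Collecting terms yields $d(1, a_d^p) \le (2D_{d-1} + 3)\,n$, so $D_d = 2D_{d-1} + 3$ suffices.

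\medskip

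I expect the main obstacle to be the bookkeeping of \emph{which group} the word-length estimates live in: the clean relation $a_d^{qn} = a_{d-1}^{-q} t^{-n} a_{d-1}^{q} t^{n}$ holds in $\G_d$, but the bound $d(1, a_{d-1}^{q}) \le D_{d-1} n$ is an estimate in $\G_{d-1}$, and transferring it into $\G_d$ requires knowing that a geodesic word for $a_{d-1}^q$ in $\G_{d-1}$ — which uses only the letters $t, a_1, \dots, a_{d-1}$ — still has bounded length when read in $\G_d$. Since those letters generate an isomorphic copy of $\G_{d-1}$ inside $\G_d$ (the presentation \eqref{Gd} restricted to those generators is exactly the presentation of $\G_{d-1}$, and the obvious map $\G_{d-1} \to \G_d$ is injective because $\G_d / \langle a_d\rangle \cong \G_{d-1}$ splits it), this transfer is legitimate and cost-free. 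The cleanest writeup therefore sets up a single induction proving simultaneously the statement for $a_d^p$ in $\G_d$, and rests on the structural fact that $\langle t, a_1, \dots, a_{d-1}\rangle \le \G_d$ is canonically isomorphic to $\G_{d-1}$; the arithmetic splitting $p = qn + r_0$ and the displayed conjugation identity are then entirely routine.
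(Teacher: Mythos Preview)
Your argument is correct and in fact more elementary than the paper's. Both proofs hinge on the same commutator identity $a_{d-1}^{-s}\,t^{-m}\,a_{d-1}^{s}\,t^{m}=a_d^{sm}$ in $\G_d$, together with the observation that if $w$ is a short word in $t,a_1,\dots,a_{d-1}$ equal to $a_{d-1}^{s}$ in $\G_{d-1}$, then $w=a_{d-1}^{s}a_d^{l}$ in $\G_d$ and the central factor $a_d^{l}$ cancels in the commutator $w^{-1}t^{-m}wt^{m}$. Where you and the paper diverge is in the arithmetic: the paper invokes the Hilbert--Waring theorem to write $p=\sum_{i=1}^{m}k_i^{d}$ with $m\le M_d$ and each $k_i\le n$, then applies the identity with $(s,m)=(k_i^{d-1},k_i)$ for each summand; you instead write $p=qn+r_0$ by ordinary division, apply the identity once with $(s,m)=(q,n)$, and handle $a_d^{r_0}$ trivially. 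Your route avoids Hilbert--Waring entirely and gives the explicit recursion $D_d=2D_{d-1}+3$.

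One point to clean up: your claim that $\langle t,a_1,\dots,a_{d-1}\rangle\le\G_d$ is a proper subgroup isomorphic to $\G_{d-1}$ is false, since $a_d=a_{d-1}^{-1}t^{-1}a_{d-1}t$ already lies in that subgroup, so it is all of $\G_d$. You yourself flag this and give the correct workaround (``more carefully\dots''), which is exactly the centrality argument above and is also what the paper uses; just drop the subgroup claim from the final write-up and run the commutator computation directly with $w$ in place of $a_{d-1}^{q}$.
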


\begin{proof}
We proceed by induction on $d$. 
The base case is $\G_1=\Z^2$, where the assertion is trivial.

The Hilbert-Waring Theorem provides us with an integer $M_d$ such that every positive integer $p$ can be written as
a sum $p=k_1^d + \dots + k_m^d$ with $k_i\in\N$ and $m\le M_d$. 
   Then $\sum_{i=1}^m k_i \leq M_d n$, because $p \leq n^d$ implies that $k_i \leq n$ for all $i$. 
  In $\G_d$, for any $k\in\N$ we have the equation   
$${a_{d-1}^{-{k^{d-1}}} t^{-k} a_{d-1}^{k^{d-1}} t^k = a_{d}^{k^{d}}.}$$ 
By induction, for $i=1, \ldots, m$, there is a word $w_i$  in the letters $t,a_1,\dots,a_{d-1}$ of length at most $D_{d-1}\ k_i$
such that $w_i = a_{d-1}^{k_i^{{d-1}}}$ 
 in $\G_{d-1}$. Recalling that $\G_{d-1}$ is obtained from $\G_d$ by killing
the central subgroup $\<a_d\>$, we have $w_i = a_{d-1}^{k_i^{{d-1}}}a_d^{l_i}$ in $\G_{d}$ for some $l_i\in\Z$.  
As $a_d$ is central, in $\G_d$ we have 
$${w_i^{-1} t^{-k_i} w_i t^{k_i} = a_{d-1}^{-k_i^{d-1}} t^{-k_i} a_{d-1}^{k_i^{d-1}} t^{k_i} = a_d^{k_i^d}}.$$
Hence, in $\G_d$,   
$$
{a_d^p = \prod_{i=1}^m w_i^{-1} t^{-k_i} w_i t^{k_i},}
$$
and the righthand side is a word of length at most  
$2(D_{d-1} +1)\sum_{i=1}^m k_i  \le 2(D_{d-1} +1)M_d \, n$. 
Taking
${D_d = 2(D_{d-1} +1)M_d}$  
completes the induction.
\end{proof}

\begin{proof}[Proof of Proposition \ref{p:whole-ball}.] 
We proceed by
induction on $d$. Again, the base case $d=1$ is trivial. Assume that the proposition is true in $\G_{d-1}$.
Given  $g  = t^r a_1^{p_1}a_2^{p_2}\dots a_d^{p_d}\in\G_d$,
with $|r|\le n$ and each $|p_i|\le n^i$, we know by induction that there is a word $W=W(t,a_1,\dots,a_{d-1})$ 
of length at most $C_{d-1}n$ such that in $\G_{d-1}$ we have
$${W=t^r a_1^{p_1}a_2^{p_2}\cdots a_{d-1}^{p_{d-1}}.}$$  
In $\G_d$, then, for  a unique $q\in\Z$ we have
$$
W= t^r a_1^{p_1}a_2^{p_2}\cdots a_{d-1}^{p_{d-1}} a_d^{q},
$$ 
and by Lemma \ref{l:normal-form} we know $|q|\le  |W|^{d} \le C_{d-1}^d n^d$. 
So, $|p_d -q| \le |p_d| + |q| \leq (1+C_{d-1}^d) n^d$, and then Lemma~\ref{l:real} leads to $d(1, a_d^{p_d-q}) \leq (1 + C_{d-1}^d) D_d n$.
Therefore, by the
triangle inequality in $\G_d$, noting that $g = Wa_d^{p_d-q}$, $$
d(1,  g) \le d(1,W) + d(1, a_d^{p_d-q}) \le C_{d-1}n + D_d (1+C_{d-1}^d) n,
$$
and defining $C_d = C_{d-1}+D_d(1 + C_{d-1}^d)$  completes the induction.   
\end{proof}

\subsection*{Lower Bound for Theorem \ref{t:main-poly}}

 The key observation here is
 that an arbitrary element  $t^rx\in\G_d  {=  \Z^d\rtimes_\phi\Z}$ with $x\in \Z^d\rtimes 1$ conjugates $a_{d-1}$ to $a_{d-1}a_d^r$,
 with $x$ playing no role. It follows that for each positive integer $r>0$,
 the unique shortest element of $\G_d$ conjugating $a_{d-1}$ to  $a_{d-1} a_d^r$ is  $t^r$. 
 
 In Lemma \ref{l:real} we established the existence of a constant $D_d$ such that $a_d^{n^d}$ lies
 in the ball of radius $D_d n$ about $1\in \G_d$, hence $d(1, a_{d-1} a_d^{n^d})\le D_d n + 1$.
 As $t^{n^d}$ is the unique shortest element conjugating $a_{d-1}$ to $a_{d-1}a_d^{n^d}$, we deduce
 $$
 \CL_{\G_d}(D_d n + 1) \ge n^d,
 $$ 
 hence $n^d\preceq \CL_{\G_d}(n)$.

\section{Centralisers in $\G_d$} \label{s:cent}

Picking up a theme from the last argument of the previous section, we note that $g\in\G_d$ will 
conjugate $\g$ to an element of the form   $\g a_d^*$ if and only if the image of $g$ in $\G_{d-1}$
centralises the image of $\g$. This highlights our need to understand centralisers $C(\gamma)$ in the groups $\G_d$.

\smallskip
\ni{\bf{Notation.}}
We will need to discuss the subgroup $\<a_1,\dots,a_d\> = \Z^d\rtimes 1 < \G_d$ repeatedly in what follows, and it
will be important to distinguish it from an abstract copy of $\Z^d$. For this reason, we shall henceforth 
denote it by $\Ad$.

\begin{proposition}\label{p:centr}
Given $\gamma\in\G_d$, if $\gamma = \gamma_0^p a_d^q$ with $p>0$ maximal, then 
\begin{enumerate}
\item $C(\gamma) = \<\gamma_0, a_d\>\cong\Z^2$ if $\gamma\not\in \Ad$,
\item $C(\gamma) = \Ad$ if $\gamma\in\Ad\smallsetminus \<a_d\>$,
\item $C(\gamma)=\G_d$ if $\gamma\in\<a_d\>$.
\end{enumerate}
\end{proposition}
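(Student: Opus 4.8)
\textbf{Proof plan for Proposition~\ref{p:centr}.}

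The plan is to treat the three cases separately, beginning with the two easy ones. For case (3), if $\gamma\in\<a_d\>$ then $\gamma$ is central by the defining relation $[t,a_d]=1$ together with $[a_i,a_d]=1$ for $i<d$, so $C(\gamma)=\G_d$. For case (2), suppose $\gamma\in\Ad\smallsetminus\<a_d\>$. Certainly $\Ad\subseteq C(\gamma)$ since $\Ad$ is abelian. For the reverse inclusion, suppose $g=t^r y\in C(\gamma)$ with $y\in\Ad$; conjugating $\gamma\in\Ad$ by $t^r$ applies $\phi^r$, and since $\gamma\notin\<a_d\>$ some $a_i$-coordinate with $i<d$ is nonzero. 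Writing $\gamma = a_i^{c_i}\cdots a_d^{c_d}$ with $c_j=0$ for $j<i$ and $c_i\neq 0$, the matrix computation (the entry $\e(i,r)=\binom{r}{d-i}$ recorded before Lemma~\ref{l:entry}) shows that $\phi^r(\gamma)=\gamma$ forces $r=0$; then $g\in\Ad$. So $C(\gamma)=\Ad$.

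The substantive case is (1): $\gamma = \gamma_0^p a_d^q$ with $p>0$ maximal and $\gamma_0\notin\Ad$. First one checks $\<\gamma_0,a_d\>\subseteq C(\gamma)$: $a_d$ is central, and $\gamma_0$ commutes with $\gamma_0^p$ and with $a_d$, hence with $\gamma$. To see this subgroup is $\cong\Z^2$, note $\gamma_0\notin\Ad$ means $\gamma_0$ has nonzero $t$-exponent, so $\gamma_0$ has infinite order and $\<\gamma_0\>\cap\<a_d\>=1$ (any power of $\gamma_0$ landing in $\Ad$ would have $t$-exponent zero); since $a_d$ is central, $\<\gamma_0,a_d\>$ is abelian, torsion-free of rank $2$. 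For the reverse inclusion $C(\gamma)\subseteq\<\gamma_0,a_d\>$: first observe $C(\gamma)=C(\gamma_0^p)$, since $\gamma$ and $\gamma_0^p$ differ by the central element $a_d^q$. Pass to the quotient $\G_d/\<a_d\>\cong\G_{d-1}$; write $\ov h$ for the image of $h$. If $g\in C(\gamma_0^p)$ then $\ov g$ centralises $\ov{\gamma_0}^{\,p}$ in $\G_{d-1}$. Here I would invoke the inductive structure: in $\G_{d-1}$ the centraliser of $\ov{\gamma_0}^{\,p}$ equals the centraliser of $\ov{\gamma_0}$ — this uses that $\ov{\gamma_0}$ has nonzero $t$-exponent, so it is not contained in $\A_{d-1}$, and for such elements roots and centralisers are rigid (the element $\ov{\gamma_0}$ generates, together with the centre of $\G_{d-1}$, its own centraliser; this is where an induction on $d$ or a direct structural argument about $\G_{d-1}$ enters). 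Granting that $C_{\G_{d-1}}(\ov{\gamma_0}^{\,p})=C_{\G_{d-1}}(\ov{\gamma_0})=\<\ov{\gamma_0}, Z(\G_{d-1})\>$, and that $Z(\G_{d-1})=\<\ov{a_{d-1}}\>$, we get $\ov g = \ov{\gamma_0}^{\,s}\,\ov{a_{d-1}}^{\,u}$ for some $s,u\in\Z$. But $\ov{a_{d-1}}^{\,u}$ need not lift to something in $\<\gamma_0,a_d\>$ — and this is the crux: we must use maximality of $p$. The maximality of $p$ in $\gamma=\gamma_0^pa_d^q$, combined with $\gamma_0\notin\Ad$, should force $u=0$: if $\ov g$ had a nontrivial $\ov{a_{d-1}}$-component not accounted for by $\ov{\gamma_0}$, then $g$ would centralise $\gamma$ while lying outside $\<\gamma_0,a_d\>$, and conjugating $\gamma=\gamma_0^pa_d^q$ by such a $g$ produces a relation exhibiting a root of $\gamma$ of order exceeding $p$, contradicting maximality. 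Making this last implication precise — translating "$\ov g$ centralises $\ov{\gamma_0}$ but $g\notin\<\gamma_0,a_d\>$" into a contradiction with the maximality of $p$ — is the main obstacle; I would do it by writing $g=\gamma_0^s h$ with $h\in C(\gamma)$, $\ov h\in\<\ov{a_{d-1}}\>$, so $h=a_{d-1}^u a_d^v$, then computing $h^{-1}\gamma_0^p h=\gamma_0^p$ explicitly to show $u$ must be $0$ (this computation sees the action of $a_{d-1}$ on $\gamma_0^p$ in $\G_d$, which shifts the $a_d$-exponent by a multiple of $u$ times the $t$-exponent of $\gamma_0^p$, and this is incompatible with commuting unless $u=0$), whence $h\in\<a_d\>$ and $g\in\<\gamma_0,a_d\>$ as required.

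I expect the bookkeeping in case (1) — specifically identifying $C_{\G_{d-1}}(\ov{\gamma_0}^p)$ with $\<\ov{\gamma_0},\ov{a_{d-1}}\>$ and then ruling out the spurious $\ov{a_{d-1}}$-component via maximality of $p$ — to be where the real content lies; the cleanest route is probably an induction on $d$ so that the description of centralisers in $\G_{d-1}$ is exactly the inductive hypothesis, with the base case $\G_1=\Z^2$ being trivial.
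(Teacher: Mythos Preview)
Your overall architecture --- induction on $d$, cases (2) and (3) dispatched directly, case (1) via the quotient $\G_d\to\G_{d-1}$ and the conjugation-by-$a_{d-1}$ computation --- matches the paper's. But there is a genuine gap in case (1), and it comes from a misattribution of where the maximality of $p$ is used.

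You assert that $C_{\G_{d-1}}(\ov{\gamma_0}^{\,p})=\<\ov{\gamma_0},\,a_{d-1}\>$. The inductive hypothesis does \emph{not} give this: applied to $\ov{\gamma}=\ov{\gamma_0}^{\,p}$, it yields $C_{\G_{d-1}}(\ov{\gamma})=\<g_1,\,a_{d-1}\>$ where $g_1$ is chosen so that $\ov{\gamma}=g_1^{p'}a_{d-1}^{q'}$ with $p'$ maximal, and $g_1$ can be a strictly deeper root than $\ov{\gamma_0}$. For a concrete failure take $d=3$ and $\gamma_0=t^3a_2\in\G_3$: then $p=2$ is maximal for $\gamma=\gamma_0^2a_3^q$, but in $\G_2$ one has $\ov{\gamma}=t^6a_2^2$ with $p'=6$, so $C_{\G_2}(\ov{\gamma})=\<t,a_2\>$ while $\<\ov{\gamma_0},a_2\>=\<t^3,a_2\>$ is of index $3$ in it. Your decomposition $g=\gamma_0^{\,s}h$ with $\ov h\in\<\ov{a_{d-1}}\>$ therefore need not exist, and the subsequent computation never gets off the ground.

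The repair is to reverse the order of your two ingredients. First run the conjugation computation in the form the paper uses: for \emph{any} lift of $a_{d-1}^k$ (not just for a specific $h$), conjugating $\gamma=xt^{-j}$ by it yields $\gamma a_d^{kj}\neq\gamma$, so the image of $C(\gamma)$ in $C_{\G_{d-1}}(\ov{\gamma})\cong\Z^2$ meets $\<a_{d-1}\>$ trivially and is therefore cyclic. Only \emph{then} does maximality of $p$ enter: the cyclic image contains $\ov{\gamma_0}$, so is $\<h\>$ with $\ov{\gamma_0}=h^k$; lifting $h$ to $\tilde h\in C(\gamma)$ gives $\tilde h^{kp}=\gamma a_d^{*}$, forcing $kp\le p$ and hence $h=\ov{\gamma_0}$. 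The maximality of $p$ pins down the generator of the cyclic image, not the vanishing of the $a_{d-1}$-component.
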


\begin{proof} We again proceed by induction on $d$, viewing $\G_{d-1}$ as the quotient of $\G_d$
by $\<a_d\>$ and writing $\overline{\g}$ for the image of $\gamma$ in $\G_{d-1}$. The base
step $\G_1\cong\Z^2$ is trivial (with case (2) vacuous). 

Cases (2) and (3) are easily verified, so in the inductive step we 
concentrate on the case $\gamma= \gamma_0^p a_d^q$
with $\gamma_0\not\in \Ad$.  

Some preliminary observations are in order: if $\ov{\g} = g^m$ in $\G_{d-1}$, then for any preimage $\tilde{g}$ 
in $\G_d$ of $g$ we have $ \tilde{g}^m = \g a_d^r$ for some $r$, so $m\le p$, by the maximality of $p$, and if $m=p$ then
$g = \ov{\gamma_0}$ by the uniqueness of roots in torsion-free nilpotent
groups. Thus $\ov{\gamma_0}$ is the unique maximal root of $\ov{\gamma}$ in $\G_{d-1}$
(even though $\gamma_0$ itself is only uniquely defined up to multiplication by a power of $a_d$). 

By induction, $C(\ov{\gamma})<\G_{d-1} $ is a free abelian {group} of rank $2$ that contains $\<a_{d-1}\>$. The image of 
 $C(\gamma)$ in $C(\ov{\gamma})$   intersects $\<a_{d-1}\>$ trivially, because  
 either $\g$ or $\g^{-1}$ is
$xt^{-j}$ with $x\in\Ad$ and $j> 0$, and  in $\G_{d}$ (n.b. $\G_d$ rather than $\G_{d-1}$)
$$ \tilde{a}_{d-1}^{-k}xt^{-j} \tilde{a}_{d-1}^{k} =  a_d^{kj} xt^{-j} \neq xt^{-j}$$
for any preimage $\tilde{a}_{d-1} = a_{d-1} a_d^*$ in $\G_d$ of $a_{d-1}\in\G_{d-1}$.
Thus the image of $C(\gamma)$ in $C(\ov{\gamma})$  is a cyclic group. 
This cyclic group contains $\ov{\gamma}$,
so by the considerations of the previous paragraph, it is contained in 
$\<\ov{\g_0}\>$. As $\g_0$ commutes with $\g$, the image is exactly $\<\ov{\g_0}\>$.

The preimage in $\G_d$ of $\<\ov{\gamma_0}\>$ is $\<\gamma_0, a_d\>$, which centralises $\gamma$. Thus
$C(\gamma) = \<\gamma_0, a_d\>$ and the induction is complete.
\end{proof}

\section{The lengths of roots in $\G_d$} \label{s:roots}

The emergence of $\gamma_0$ in Proposition \ref{p:centr}
 throws up a need to understand roots in the groups $\G_d$.
In particular, when it comes to estimating the lengths of minimal conjugating elements, we will need to bound the
lengths of roots of elements in $\G_d$.

The bounds in the following proposition are not optimal, but
they will suffice for our purposes and a sharpening would involve an unwelcome increase in
notation.

\begin{proposition}\label{p:roots} There is a constant $k_d>0$ so that for all $p, n>0$ and all
 $\g\in\G_d$ with $d(1,\g)\le n$, 
if $h^p = \g$ then  
$$h = t^{m_0}a_1^{m_1}\cdots a_d^{m_d}$$ 
with $|m_0|\le n/p$ and
$|m_i| \le k_d n^i/p$ for $i=1,\dots,d$.
\end{proposition}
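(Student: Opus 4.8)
The plan is to proceed by induction on $d$, exploiting the central extension $1 \to \<a_d\> \to \G_d \to \G_{d-1} \to 1$ exactly as in the earlier proofs in this section. The base case $d=1$ is $\Z^2$, where roots are trivial to control. For the inductive step, suppose $h^p = \g$ in $\G_d$ with $d(1,\g)\le n$, and write $h = t^{m_0}a_1^{m_1}\cdots a_d^{m_d}$ in the normal form of Lemma~\ref{l:normal-form}. Passing to the quotient $\G_{d-1}$, we get $\ov h^{\,p} = \ov\g$ with $\ov h = t^{m_0}a_1^{m_1}\cdots a_{d-1}^{m_{d-1}}$ and $d(1,\ov\g)\le n$ (the quotient map is $1$-Lipschitz on generators). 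By the inductive hypothesis applied in $\G_{d-1}$, we immediately obtain $|m_0|\le n/p$ and $|m_i|\le k_{d-1}n^i/p$ for $i=1,\dots,d-1$. So the entire content of the inductive step is to bound the last exponent $|m_d|$.

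To control $|m_d|$, the idea is to compute the normal form of $h^p$ in $\G_d$ and read off its $a_d$-exponent, then compare with the $a_d$-exponent of $\g$, which is at most $n^d$ in absolute value by Lemma~\ref{l:normal-form}. Computing $h^p$ amounts to iterating the shuffle: each time a block $t^{m_0}a_1^{m_1}\cdots a_{d-1}^{m_{d-1}}$ is moved past the accumulated prefix $t^{(k-1)m_0}a_1^{*}\cdots$, the letters $a_i^{m_i}$ get conjugated by a power of $t$ of size at most $(p-1)|m_0|\le p\cdot n/p = n$, and by equation~\eqref{e:error} together with Lemma~\ref{l:entry} this introduces an $a_d$-contribution bounded by $|m_i|\cdot \ee\,n^{d-i} \le \ee\,k_{d-1}(n^i/p)\,n^{d-i} = \ee k_{d-1} n^d/p$. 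There are at most $p$ such blocks and at most $d-1$ values of $i$ per block, but the crucial point is that each individual contribution already carries the factor $1/p$, and summing $p$ of them gives a bound $O(n^d)$ rather than $O(n^d/p)$ — so I will need to be slightly more careful. The right bookkeeping is: the total $a_d$-exponent of $h^p$ satisfies $|m_d^{(p)}|\le p|m_d| + (\text{correction})$, where the correction is the sum over $k=1,\dots,p-1$ of the $a_d$-debt incurred when moving the $(k{+}1)$-st copy of $\ov h$'s abelian part past $t^{km_0}$; since $|km_0|\le n$, each such term is $\le \sum_{i=1}^{d-1}|m_i|\,\ee|km_0|^{d-i}$, and because $|m_i|\le k_{d-1}n^i/p$ this is $\le (d-1)\ee k_{d-1} n^d/p$, so summing over $k$ gives a correction $\le (d-1)\ee k_{d-1} n^d$. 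Combined with $|m_d^{(p)}|\le n^d$ (as $h^p=\g$, normal-form exponents bounded by Lemma~\ref{l:normal-form}), we get $p|m_d|\le n^d + (d-1)\ee k_{d-1} n^d$, hence $|m_d|\le (1 + (d-1)\ee k_{d-1})\,n^d/p$, and we set $k_d = \max(k_{d-1},\,1+(d-1)\ee k_{d-1})$.

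The main obstacle, and the step requiring genuine care, is the bookkeeping in the previous paragraph: making precise the claim that raising $h$ to the $p$-th power and collecting into normal form produces an $a_d$-exponent of the form $p\,m_d$ plus a correction term that is $O(n^d)$ independent of the number of factors, with all the intermediate $t$-powers that appear in the shuffle being bounded by $n = p\cdot(n/p)$ rather than by something larger. One clean way to organise this is to write $h = t^{m_0}\alpha$ with $\alpha = a_1^{m_1}\cdots a_{d-1}^{m_{d-1}}a_d^{m_d}\in\Ad$, so that $h^p = t^{pm_0}\,(\phi^{-(p-1)m_0}\alpha)(\phi^{-(p-2)m_0}\alpha)\cdots(\phi^{-m_0}\alpha)\,\alpha$, a product of $p$ elements of the abelian group $\Ad$; its $a_d$-coordinate is then literally $\sum_{k=0}^{p-1}$ of the $a_d$-coordinate of $\phi^{-km_0}\alpha$, and each summand equals $m_d + \sum_{i=1}^{d-1} m_i\,\e(i,-km_0)$, so the total is $p\,m_d + \sum_{i=1}^{d-1} m_i\sum_{k=0}^{p-1}\e(i,-km_0)$. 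Bounding $|\sum_{k=0}^{p-1}\e(i,-km_0)|\le \sum_{k=0}^{p-1}\ee|km_0|^{d-i}\le \ee\,p\,n^{d-i}$ via Lemma~\ref{l:entry} (using $|km_0|\le n$), and using $|m_i|\le k_{d-1}n^i/p$, yields the correction bound $\le (d-1)\ee k_{d-1}n^d$ as claimed. This reduces everything to elementary estimates, and the only subtlety is being careful that $\Ad$ is abelian so that the product reorders freely and the $a_d$-coordinate is genuinely additive.
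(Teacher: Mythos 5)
Your proof is correct and follows essentially the same route as the paper's: induction on $d$ via the central extension, passing to $\G_{d-1}$ to control $m_0,\dots,m_{d-1}$, then computing the $a_d$-exponent of $h^p$ as $pm_d$ plus an error term bounded via Lemma~\ref{l:entry} with $|km_0|\le n$, and comparing against the $n^d$ bound from Lemma~\ref{l:normal-form}. The only differences are cosmetic — the paper first strips off $a_d^{m_d}$ (working with $h_0 = ha_d^m$) and then adjusts, whereas you expand $h^p$ directly as a product of $\phi$-translates of $\alpha$; also note the small sign slip ($\phi^{-km_0}$ should be $\phi^{km_0}$ with the paper's convention $t^{-1}a_it=\phi(a_i)$), which is immaterial since Lemma~\ref{l:entry} bounds $|\e(i,m)|$ for all $m\in\Z$.
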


For the most part, we shall only need the following easy consequence of Propositions \ref{p:whole-ball} 
and \ref{p:roots}.
 
 \begin{corollary}\label{c:roots} There is a constant $K_d$ so that
 if $h^p=\g$ in $\G_d$ and $d(1,\g)\le n$, then $d(1,h)\le K_d\, n.$
 \end{corollary}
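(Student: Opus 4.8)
The plan is to deduce Corollary~\ref{c:roots} directly from Proposition~\ref{p:roots} and Proposition~\ref{p:whole-ball}, so this is essentially a bookkeeping argument: the real content lies in Proposition~\ref{p:roots}, which we are permitted to assume. First I would invoke Proposition~\ref{p:roots}: since $h^p = \g$ and $d(1,\g)\le n$, there is a normal-form expression $h = t^{m_0}a_1^{m_1}\cdots a_d^{m_d}$ with $|m_0|\le n/p \le n$ and $|m_i|\le k_d n^i / p \le k_d n^i$ for $i=1,\dots,d$. If $k_d \le 1$ we are in exactly the hypothesis of Proposition~\ref{p:whole-ball}; in general we must absorb the constant $k_d$.

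The clean way to absorb $k_d$ is to set $n' = \lceil k_d^{1/d}\rceil\, n$ (or even just $n' = \lceil k_d\rceil\, n$, since $k_d^{i/d}\le k_d$ when $k_d\ge 1$ and $i\le d$). Then $|m_0| \le n \le n'$ and $|m_i| \le k_d n^i \le (n')^i$ for every $i=1,\dots,d$, so $h$ satisfies the hypotheses of Proposition~\ref{p:whole-ball} with $n$ replaced by $n'$. That proposition then gives $d(1,h) \le C_d\, n' \le C_d \lceil k_d\rceil\, n$, and we may take $K_d = C_d \lceil k_d\rceil$, which depends only on $d$. (If one prefers to avoid the ceiling, observe that enlarging $n$ to any integer $\ge k_d^{1/d} n$ works, and such an integer of size at most $k_d^{1/d}n + 1 \le (k_d^{1/d}+1)n$ always exists for $n\ge 1$; this only changes the constant.)

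There is one minor edge case to note: Proposition~\ref{p:roots} is stated for $p,n>0$, and the corollary likewise presupposes $d(1,\g)\le n$ with the implicit understanding that $p\ge 1$ and $n\ge 1$; the statement is vacuous or trivial otherwise, so nothing needs to be said. I would also remark, for clarity, that Proposition~\ref{p:roots} already encodes the bound $|m_0|\le n/p$ with no constant, so the constant $k_d$ only intervenes in the $a_i$-exponents; this is why a single rescaling of $n$ suffices rather than needing separate adjustments per coordinate.

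I do not anticipate any genuine obstacle here — the only thing to be careful about is the interplay between the constant $k_d$ in Proposition~\ref{p:roots} and the exponent bounds required by Proposition~\ref{p:whole-ball}, and the rescaling $n \mapsto \lceil k_d\rceil n$ handles that uniformly in $i$ precisely because $k_d n^i \le (\lceil k_d\rceil n)^i$. So the proof is three lines: apply Proposition~\ref{p:roots}, rescale to clear the constant, apply Proposition~\ref{p:whole-ball}, and read off $K_d$.
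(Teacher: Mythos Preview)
Your proposal is correct and follows exactly the route the paper intends: the corollary is stated there as an ``easy consequence of Propositions~\ref{p:whole-ball} and~\ref{p:roots}'' with no further proof given, and your bookkeeping via the rescaling $n\mapsto \lceil k_d\rceil\,n$ is precisely how one fills in that sentence.
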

 
Before proving Proposition \ref{p:roots}, we consider the case $d=2$, which admits a more direct argument.

\begin{example}
In the 3-dimensional Heisenberg group $\G_2 = \<a_1,a_2,t\>$ we have
$$
(a_1t)^n = t^na_1^n a_2^{\sigma(n)},
$$
where 
$$\sigma(n) = 1+\dots + n = n(n+1)/2={\rm{Area}}_{\<a_1,t\>}(u^nv^{-1})$$
 with $u\equiv a_1t$ and $v\equiv t^na_1^n$.  
One can see that $\sigma(n)$ really is ${\rm{Area}}_{\<a_1,t\>}(u^nv^{-1})$ either by considering the van Kampen 
enclosed by the loop $u^nv^{-1}$ in the $(a_1,t)$-plane tessellated by squares, or else by arguing algebraically:
we reduce $u^nv^{-1} \equiv (a_1 t)^n a_1^{-n} t^{-n}$ to the empty word by 
starting in the middle and replacing the subword $a_1 t a_1^{-1}$ by $t$ at a cost of applying $1$ commutator relation, then $a_1 t^2 a_1^{-1}$ by $t^2$ at an additional cost of $2$, etc., until  finally  $a_1 t^n a_1^{-1}$ is replaced by $t^n$ at a cost of $n$, after which we can freely reduce $t^n t^{-n}$ to the empty word.

If $n$ is odd, then
 $$
(a_1ta_2^{-(n+1)/2})^n = t^na_1^n.
$$
Thus $h_n:=a_1ta_2^{-(n+1)/2}$ is the unique $n$-th root of $g:=t^na_1^n$ in $\G_2$. Projecting to $\G_1=\Z^2$,
it is easy to see that $d(1,t^na_1^n)=2n$ in $\G_2$. On the other hand,   $d(1,h_n)$ is approximately 
$\sqrt{n}$, even though $h_n^n=g$.

We now consider the more general situation where 
$\gamma\in\G_2\smallsetminus\<a_1,a_2\>$ is a $p$-th power, say $\g=h^p$. Counting occurrences of $t$,
we see that $|p|\le d(1,\g)$. Our goal is to estimate the length of $h$.
 Suppose $n=d(1,\g)$ and fix a geodesic word $w$ representing $\g$. Let $\overline{w}$ be the word obtained
 from $w$
by deleting all occurrences of $a_2$ and note that this word equals the image of $\g$ in $\G_1$. We
write the image of $h$ in $\G_1=\Z^2$ as a geodesic word $v$ in the free {abelian}   group on $\{a_1,t\}$.
Replacing $\g$ and $h$ by their inverses if necessary, we may assume that only positive powers of $t$ arise in $v$.
 In $\G_1=\Z^2$ we know that $v^p=\ov{w}$ implies $|v|= d(1, \ov{w})/p$, so $|v|\le |w|/p = n/p$.
As in Lemma \ref{l:dehn}, in $\G_2$ we have 
$$
v^p= w a_2^r,
$$
where $|r|\le {\rm{Area}}_{\G_1}(v^{-p} \ov{w})$,  
which is less than $n^2$, by  
the quadratic isoperimetric inequality in $\Z^2$.  (In more detail, 
$v^{-p}\ov{w}$ is a non-empty word $\tau$ in $a_1$ and $t$ of length at most $2n$ that equals the identity in $\Gamma_1$
and any such $\tau$ has a subword which freely reduces to $a_1^{\pm 1} t^l a_1^{\mp 1}$ or $t^{\pm 1} a_1^l t^{\mp 1}$ for some $|l|\leq n/2$;  this subword can be replaced by $t^l$ or $a_1^l$, respectively, at a cost of applying at most $n/2$ commutator relations, from which it follows that ${\rm{Area}}(\tau) < n^2$, by an induction on length.)  
But $v=ha_2^j$ in $\G_2$ for some $j\in\Z$, so 
$$wa_2^r = v^p = h^pa_2^{jp} = \gamma a_2^{jp} = w a_2^{jp}.$$ 
Therefore $r=jp$.

We saw earlier that $|v|\le n/p$ and $|r| {<}  n^2$, 
and from Proposition \ref{p:whole-ball} we know $d(1, a_2^{-j}) \le C_2\, \sqrt{|j|} = C_2\sqrt{|r/p|}$, which is less than $C_2\, n/\sqrt{p}$. Thus,  
taking $v\in\<a_1, t\>$ to be $v=t^{m_0}a_1^{m_1}$ with $|m_0| + |m_1| \le n/p$,
we have $h = va_2^{-j} = t^{m_0}a_1^{m_1}a_2^{{-j}}$ 
in the form required by Proposition \ref{p:roots},
since $|j| \le n^2/p$. Moreover,  
$$
d(1,h)\le |v| + d({1}, a_2^{{-j}})  \le  \frac{1}{p} (n  + C_2\, np^{1/2}) \le C_2\ \frac{1}{p} (n  + np^{1/2})\le 2C_2 \ np^{-1/2}.
$$ 
\end{example}

\begin{proof}[Proof of Proposition \ref{p:roots}.] We proceed by induction on $d$. The case $d=1$ is trivial and the case $d=2$
was covered in the preceding example. Assume true up to $d-1$. We have $\gamma, h \in \Gamma_d$ with $h^p = \gamma$ and $d(1, \gamma) \leq n$.  Let $\ov{h}$ be the image of $h$ in $\G_{d-1}$. Then, by induction, $\ov{h} = t^{m_0}a_1^{m_1}\dots a_{d-1}^{m_{d-1}}$,
in $\G_{d-1}$, with the $|m_i|$ bounded as stated in the proposition. Let 
$$h_0:=t^{m_0}a_1^{m_1}\cdots a_{d-1}^{m_{d-1}}\in \G_d.$$
Then $h_0 = ha_d^m$ for some $m\in\Z$, and
if the normal form for $h_0^p\in\G_d$ is
$$
h_0^p = t^{pm_0}a_1^{s_1}\cdots a_d^{s_d},
$$
then the normal form for $\gamma   = h^p = (h_0 a_d^{-m})^p$   is
$$
\gamma = t^{pm_0} a_1^{s_1}\cdots a_{d-1}^{s_{d-1}} a_d^{s_d-mp},
$$ 
and from Lemma~\ref{l:normal-form} we have $|s_d - mp|\le n^d$. So, if
we can prove that there is a constant $\alpha_d$ such that 
\begin{equation}\label{e:alpha}
 |s_d| \le \alpha_d n^d,	
\end{equation}
then we will have 
$|mp| \le (\alpha_d+1)n^d$
and defining $k_d:= \max \set{ \alpha_d +1, k_{d-1} }$   will complete the induction.
 
To obtain the required bound on $s_d$, we transform $h_0^p$ into normal form carefully, pushing all occurrences of 
$t^{\pm 1}$ to the left and then shuffling the resulting element of $\Ad\cong\Z^d$ into normal form.
The pushing of the $t$ letters amounts to using free identities $a_it^{{j} m_0} = t^{{j}m_0}(t^{-{j}m_0}a_it^{{j}m_0})$ 
and then evaluating $t^{-jm_0}a_it^{jm_0}$ as $\phi_d^{jm_0}(a_i)$. At this point, it is 
important to recall from equation (\ref{e:error}) that  for each $i\le d-1$
$$
\phi_d^{jm_0}(a_i) = \phi_{d-1}^{jm_0}(a_i) a_d^{\e(i,jm_0)}.
$$
(This is equally 
valid for the negative powers of $\phi$ arising from pushing $t^{-1}$ if $m_0<0$.)
 
The result of shuffling $h_0^p$ into normal form in $\G_{d-1}$ (producing the word 
$t^{pm_0}a_1^{s_1}\cdots a_{d-1}^{s_{d-1}}$)
 differs from the result of doing it in $\G_d$ by 
a power of $a_d$ that can be calculated by considering how many copies of each letter $a_i$ in $h_0^p$
we have to move each $t^{m_0}$ past, adding up the number of $a_d$ letters produced by each push. The total 
(ignoring cancellation due to signs if $m_0<0$) is
$$
M = \sum_{j=1}^{p-1}  \sum_{i=1}^{d-1} |m_i\, \e(i,jm_0)|.
$$
We estimate  this (crudely) by using the inequality $|\e(i,m)| \le \ee\, m^{d-i}$ from 
 Lemma~\ref{l:entry}, noting that $|j m_0|$ is less than  
$|pm_0|$, which  is the absolute value of the exponent sum of $t$ in $\g$, which is less than $n$.
Thus 
$$M  \le  \sum_{j=1}^{p-1}  \sum_{i=1}^{d-1} |m_i\, \ee\, (jm_0)^{d-i}|
 \le
\ee\, (p-1) \sum_{i=1}^{d-1} |m_i| \, n^{d-i},  $$
and our induction assures us that $|pm_i| \le k_{d-1} n^i$, so  
$$
|s_d|\le M\le 
\ee\, k_{d-1} \sum_{i=1}^{d-1}   n^i n^{d-i} =   \ee\, k_{d-1} \sum_{i=1}^{d-1}  n^d.$$
Thus, to obtain the required bound (\ref{e:alpha}),  it suffices to set $\alpha_d := d\ee k_{d-1}$.  
\end{proof}

\section{Quantifying B\'{e}zout's Lemma} \label{s:bezout}

We will need an elementary observation concerning  B\'{e}zout's Lemma.

\begin{lemma}\label{l:euclid}
For positive integers $A$ and $B$,  if $e = \gcd (A,B)$
then there exist a non-negative integer $\mu<A$ and a positive integer $\lambda\le B$ such that  $\lambda A - \mu B =e$.
If $A>1$,  one can assume $\mu $ is positive.
\end{lemma}

\begin{proof} By B\'{e}zout's Lemma, there exist non-zero integers $a$ and $b$ such that $aA+bB=e$.
 If $A=1$,  take $\lambda=1$ and $\mu=0$.  If $A>1$ and $B=kA$,  
take $\lambda=k+1$ and $\mu =1$.  In all other cases,    
 $A$ does not divide $e$,  so it does not divide $b$,  and $b=kA-r$ with $0< r < A$. Then $(kB +a) A  - rB=e$ and 
$$
(kB+a) =  (r/A)B + (e/A)
$$
is a positive integer less than or equal to  $B$, because  $r/A$ and $e/A$ are both positive numbers less than $1$.
Let $\lambda = (kB+a)$ and let $\mu = r$.
\end{proof}

The simple estimate in the following corollary can be sharpened (cf.~\cite[Corollary 3.3]{BrRi3}) but it is sufficient for our needs in this article.

\begin{corollary}\label{c:new} Let $A, B$ and $m$ be integers and let $N=\max\{|A|,\, |B|\}$. If there is
an integer solution $(x,y)$ to the equation $Ax+By=m$, then there is an integer solution $(x_0,y_0)$
with $\max \{ |x_0|,\ |y_0|\} \le |m| + N^2$.
\end{corollary}

\begin{proof}
If $N= 0$ the assertion is trivial,  so suppose $|A|\ge 1$. 
Write $m=k A + r$ with  $|k|\le |m|$ and $0\le r <|A|$.  As $m$ lies in the subgroup of $\Z$ generated
by $A$ and $B$, so does $r$. This subgroup is generated by $e =  \gcd (A,B)$,  so $r=r_0e$
for some $r_0\le r<|A|$.  The lemma allows us to write $e= \lambda_0 A + \mu_0 B$ with 
$\max\{|\lambda_0|,\, |\mu_0|\} \le N$. Then
$$m = (k+\lambda_0r_0) A + \mu_0 B,$$
and $|k+\lambda_0r_0| \le |k| + r|\lambda_0|\le |m| + N^2$.
\end{proof}

\section{The proof of Theorem \ref{t:main-poly}}

At the end of Section \ref{s:normal forms}  we proved that $\CL_{\G_d}(n) \succeq n^d$, so the following proposition completes the
proof of Theorem~\ref{t:main-poly}.

\begin{proposition}\label{p:upper}
There is an integer $B_d$ such that if $u,v\in \G_d$ are conjugate and $\max\{ d(1,u),\, d(1,v)\} \le n$,
then there exists $g\in\G_d$ with $g^{-1}ug=v$ and $d(1,g)\le B_dn^d$. 
\end{proposition}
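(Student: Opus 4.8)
The proof proceeds by induction on $d$, following the outline given in the introduction. The base case $d=1$ is trivial since $\G_1\cong\Z^2$ is abelian. For the inductive step, I would first dispose of the easy case in which $v$ (equivalently $u$) lies in $\Ad = \Z^d\rtimes 1$: here the projection to $\G_{d-1}$ together with Proposition~\ref{p:centr} pins down the conjugating coset tightly, and one extracts a short conjugator directly using Proposition~\ref{p:whole-ball} and Corollary~\ref{c:roots}. So assume $v\notin\Ad$. Projecting to $\G_{d-1}$, the inductive hypothesis yields a word $\theta$ in $t,a_1,\dots,a_{d-1}$ with $|\theta|\preceq n^{d-1}$ conjugating $\ov u$ to $\ov v$; lifting to $\G_d$ this gives $\theta^{-1}u\theta = va_d^r$ in $\G_d$ for some $r\in\Z$. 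By Lemma~\ref{l:dehn} and Theorem~\ref{t:dehn} (the Dehn function of $\G_{d-1}$ is $\simeq n^d$), applied to the null-homotopic word $(\theta^{-1}u\theta)\ov v^{-1}$ of length $\preceq n^{d-1}$, I get $|r|\preceq (n^{d-1})^d = n^{d(d-1)}$. It then remains to show: if $va_d^r$ is conjugate to $v$ in $\G_d$ and $|r|\preceq n^{d(d-1)}$, then the conjugacy is realised by some $h$ with $d(1,h)\preceq n^d$; composing $\theta$ with this $h$ finishes the induction.

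For this final reduction I would first pass to the maximal-power decomposition $\ov v = g_0^p a_{d-1}^\ast$ in $\G_{d-1}$ supplied by Proposition~\ref{p:centr}, and use the root-length bound of Section~\ref{s:roots} — specifically the estimate $d(1,g_0)\preceq d(1,v)\preceq n$ derived from Proposition~\ref{p:roots}/Corollary~\ref{c:roots} — to keep a lift $\tilde g_0\in\G_d$ of bounded length. Writing $q$ for the exponent sum of $t$ in $g_0$ and $e=\gcd(p,q)$, the conjugation action on the coset $v\<a_d\>$ is governed by the homomorphism $\zeta_{\ov v}\colon C(\ov v)\to\Z$ from Definition~\ref{d:def}: conjugating by $a_{d-1}^M$ shifts the $a_d$-exponent by (roughly) $Mpq$, while conjugating by $\tilde g_0$ shifts it by (roughly) $\pm pq$ as well — more precisely the image of $\zeta_{\ov v}$ is generated by $pq/e\cdot(\text{something})$, so the reachable shifts are the multiples of $pqe$ up to bounded index. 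The conjugacy of $v$ to $va_d^r$ forces $r$ to lie in this subgroup, and I would use the two-step scheme: first conjugate by $a_{d-1}^M$ with $|M|\approx |r|/(pq) \preceq n^{d(d-1)}$ to reduce the $a_d$-discrepancy down to something of size $<pqe$, which since $p,q\le n$ is itself $O(n\cdot e) \preceq n^2 \preceq$ (a manageable amount); because $\<a_{d-1}\>$ is distorted of degree $d-1$ in $\G_d$, this conjugator has length $\preceq (n^{d(d-1)})^{1/(d-1)}\cdot(\text{const}) = n^d$. Then a second, short conjugator built from powers of $\tilde g_0$ and $a_d$ (using the Euclid-type Lemma~\ref{l:euclid} to hit the residual $r$ exactly with a combination of length $\preceq n$ via Corollary~\ref{c:roots}) clears the remaining discrepancy.

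Assembling: $g = \theta\cdot a_{d-1}^{M}\cdot(\text{short correction})$ satisfies $g^{-1}ug=v$ and $d(1,g) \le |\theta| + d(1,a_{d-1}^M) + O(n) \preceq n^{d-1} + n^d + n \preceq n^d$, giving $B_d$ with the required property.

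The step I expect to be the main obstacle is the penultimate one: bounding the length of a conjugator taking $v$ to $va_d^r$ for \emph{small} $r$, and in particular getting the arithmetic of the subgroup $\image\zeta_{\ov v}$ exactly right so that (i) the reduction via $a_{d-1}^M$ really does bring $|r|$ below the threshold $pqe$ with $|M|$ no larger than $n^{d(d-1)}$, and (ii) the residual value is genuinely reachable by a short conjugator rather than merely reachable in principle. This is where the precise structure of $C(\ov v)$ from Proposition~\ref{p:centr}, the definition of $\zeta_{\ov g}$, and the bound $d(1,g_0)\preceq d(1,v)$ all have to be combined carefully; a careless estimate here loses a power of $n$ and breaks the induction.
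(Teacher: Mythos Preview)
Your plan is essentially the paper's own proof: same induction, same easy case for $u,v\in\Ad$, same lift-and-Dehn-function step to get $|r|\preceq n^{d(d-1)}$, same two-stage conjugator (big power of $a_{d-1}$, then a Bézout-style correction) governed by $\zeta_{\ov v}$. Two points need sharpening before this goes through.

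First, your description of $\im\zeta_{\ov v}$ is not right. Conjugating by $a_d$ does nothing (it is central), so the correction must be built from $g_0$ and $a_{d-1}$, not $g_0$ and $a_d$. With $\ov v=g_0^p a_{d-1}^{s}$ (where $0<s<p$) and $q$ the $t$-exponent of $g_0$, one computes $\zeta_{\ov v}(a_{d-1}^{-1})=pq$ and $\zeta_{\ov v}(g_0)=sq$, so $\im\zeta_{\ov v}=q\gcd(p,s)\,\Z$; the relevant $e$ is $\gcd(p,s)$, not $\gcd(p,q)$, and the reachable shifts are multiples of $qe$, not of $pqe$.

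Second, your length estimate for the ``short correction'' is too optimistic. Writing the residual as $\rho q e$ with $0\le\rho<p$ and using Bézout coefficients $\lambda,\mu<p$ with $\lambda s-\mu p=e$, the correcting element is $(g_0^{\lambda}a_{d-1}^{\mu})^{\rho}$, whose length is bounded by $\rho(\lambda\,d(1,g_0)+\mu)\preceq p^2 n\le n^3$, not $\preceq n$. This is harmless for $d\ge 3$ since $n^3\le n^d$, but it means the inductive step as written fails for $d=2$; the paper handles $\G_2$ (Heisenberg) by a direct and easy computation before invoking the general machinery. You should do the same.
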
 

We will again proceed by induction on $d$.
If $d=1$ then $\G_d=\Z^2$ and the proposition is trivial.  
{If $d=2$ then $\G_d$ is the 3-dimensional integral Heisenberg group and since $u$ and $v$ are conjugate in $\G_2$, their images in  $\G_{1} \cong \Z^2$ are equal.  So $u = t^r a_1^{\alpha} a_2^{\eta}$ and $v = t^r a_1^{\alpha} a_2^{\xi}$ for some $r, \alpha, \eta, \xi \in \Z$, and there exist $s, \omega \in \Z$ such that $g = t^s a_1^{\omega}$ satisfies $u g = g v$ in $\G_d$. 
Now  in $\G_2$, 
$$ug =   ( t^r a_1^{\alpha} a_2^{\eta})(   t^s a_1^{\omega})  =  t^{r+s} \phi^s( a_1^{\alpha} a_2^{\eta}) a_1^{\omega} = t^{r+s} a_1^{\alpha + \omega} a_2^{s\alpha + \eta}
$$
and 
$$gv =   (   t^s a_1^{\omega}) ( t^r a_1^{\alpha} a_2^{\xi})  =  t^{r+s} \phi^r( a_1^{\omega}) a_1^{\alpha} a_2^{\xi} = t^{r+s} a_1^{\alpha + \omega} a_2^{r\omega + \xi},
$$
 and these are equal if and only if $s\alpha + \eta = r\omega + \xi$.  This condition can be expressed as $s\alpha -   r\omega =  \xi - \eta$, and if we regard this as an equation with variables $s$ and $\omega$, 
 Corollary~\ref{c:new} implies that this equation has an integer solution with $|s|$ and $|\omega|$ both at most $|\xi - \eta| + \max \set{|\alpha|^2, |r|^2}$.  Lemma~\ref{l:normal-form}  tells us that $|\alpha|$ and $|r|$ are at most $n$ and $|\xi|$ and $|\eta|$ are at most  $n^2$. The existence of a suitable constant $B_2$ follows.}  

Assume now {that $d \geq 3$ and}   that the existence of $B_{d-1}$ has been
established.  Suppose that $u,v\in\G_d$ are conjugate and $\max\{ d(1,u),\, d(1,v)\} \le n$.

\begin{lemma}\label{easy-case}
Proposition \ref{p:upper} is true for elements $u,v\in \Ad= \Z^d\rtimes 1 <\G_d$.
\end{lemma}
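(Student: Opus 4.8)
Lemma \ref{easy-case} concerns the case where $u, v \in \Ad = \Z^d \rtimes 1$. Here is my plan.

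\medskip

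The plan is to exploit the $\Z$-action on $\Ad \cong \Z^d$ given by $\phi$, and to reduce the conjugacy equation to a linear-algebra problem over $\Z$ that can be controlled with the geometry-of-numbers input from Section~\ref{s:bezout} (Lemma~\ref{l:euclid}) together with the normal-form bounds of Lemma~\ref{l:normal-form}. First I would observe that since $u, v \in \Ad$ are conjugate in $\G_d$ and $\G_d / \Ad \cong \Z$ is abelian, the conjugator can be taken to be of the form $g = t^s x$ with $x \in \Ad$; moreover, since $\Ad$ is abelian and normal, $x$ acts trivially by conjugation on elements of $\Ad$, so in fact we may take $g = t^s$ for some $s \in \Z$. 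Thus $u$ and $v$ are conjugate in $\G_d$ if and only if $\phi^s(u) = v$ in $\Ad$ for some $s \in \Z$ (replacing $u, v$ by inverses is not even needed here). The task is then to show that the smallest such $s$ (if it exists) satisfies $|s| \preceq n^d$, whence $d(1, t^s) = |s| \preceq n^d$.

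\medskip

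The key step is to bound the smallest $s$ with $\phi^s(u) = v$. Writing $u = a_1^{p_1} \cdots a_d^{p_d}$ and $v = a_1^{q_1} \cdots a_d^{q_d}$ in normal form, Lemma~\ref{l:normal-form} gives $|p_i|, |q_i| \le n^i$. Since $\phi$ fixes $a_d$ and the matrix of $\phi$ is unipotent lower-triangular, the condition $\phi^s(u) = v$ forces $p_i = q_i$ for the top coordinates down to the first index $i_0$ where $u$ and $v$ differ, and then $\phi^s$ acting on the truncated vector $(p_1, \dots, p_{i_0})$ must hit $(q_1, \dots, q_{i_0})$. The first nontrivial equation is linear in $s$: comparing the $(i_0+1)$-th coordinate (or the appropriate coordinate one step below $i_0$) gives an equation of the shape $p_{i_0} \cdot \e(i_0, s) = q_* - p_*$ up to lower-order corrections, i.e. essentially $p_{i_0}\, s = (q_{i_0 + 1} - p_{i_0+1})$ modulo contributions already determined — so $s$ is pinned down (up to the possibility that $p_{i_0} = 0$, which cannot happen at the first index of disagreement for a unipotent action unless $u = v$) to a single value with $|s| \le |q_{i_0+1} - p_{i_0+1}| / |p_{i_0}| \le 2 n^{i_0+1} \le 2n^d$. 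The remaining coordinates then impose additional polynomial identities in $s$ which are automatically satisfied because $u$ and $v$ are assumed conjugate; one must check they are consistent, but that is guaranteed by the hypothesis. Taking $B_d$ large enough to absorb the constant $2$ and the distortion constants completes this case.

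\medskip

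I expect the main obstacle to be handling the degenerate sub-case where $u$ and $v$ lie in a proper $\phi$-invariant subgroup, e.g. when several leading coordinates vanish, so that the "first nontrivial linear equation" is not simply $p_{i_0} s = (\text{something})$ but involves a binomial coefficient $\binom{s}{k}$ of higher degree in $s$. In that situation one recovers $s$ from $\binom{s}{k} = (q_* - p_*)/p_{i_0}$, and since the right-hand side has absolute value at most $2n^d / 1 = 2n^d$, one still gets $|s| \preceq (n^d)^{1/k} \le n^d$; so the bound survives, but the bookkeeping of exactly which coordinate yields the governing equation requires care with the explicit form of $\e(i,s) = \binom{s}{d-i}$ from Lemma~\ref{l:entry}. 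A clean way to organise this is to induct on $d$ directly: pass to $\G_{d-1}$, where $\bar u, \bar v \in \A_{d-1}$ are conjugate, apply the inductive statement (or rather the $\Ad$-version of Proposition~\ref{p:upper} in $\G_{d-1}$) to get a conjugator $t^s$ with $|s| \preceq n^{d-1}$ in $\G_{d-1}$, then lift: $\phi_d^s(u) = v\, a_d^r$ in $\G_d$ for some $r$, with $|r| \preceq n^{d}$ controlled via equation~\eqref{e:error} and Lemma~\ref{l:entry} (the power $a_d^{\e(i,s)}$ contributes at most $\ee |p_i| |s|^{d-i} \preceq n^i \cdot n^{(d-i)(d-1)}$, a polynomial bound), and finally kill the leftover $a_d^r$ by conjugating $a_d^r v$ back to $v$ — but since $a_d$ is central, $a_d^r v$ is conjugate to $v$ only if $r = 0$; so in fact conjugacy of $u, v$ in $\G_d$ forces $r = 0$ at the correct choice of $s$, and it remains only to show that the correct $s$ can be chosen with $|s| \preceq n^{d-1}$ times a bounded factor, hence $|s| \preceq n^d$. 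Either route works; I would present the direct linear-algebra version as it is the most transparent and keeps all constants explicit.
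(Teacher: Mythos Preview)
Your direct linear-algebra approach is correct and suffices: once you reduce to finding $s$ with $\phi^s(u)=v$, the key observation is that if $k$ is the smallest index with $p_k\neq 0$ (so $u\notin\langle a_d\rangle$), then the coordinates $j\le k$ force $p_j=q_j$, and the $(k{+}1)$-st coordinate gives the \emph{linear} equation $s\,p_k = q_{k+1}-p_{k+1}$, whence $|s|\le 2n^{k+1}\le 2n^d$ by Lemma~\ref{l:normal-form}. Two small corrections: your index $i_0$ is off by one (the governing equation sits at the first index of disagreement and its coefficient is $p_{i_0-1}$, not $p_{i_0}$), and the binomial degeneracy you worry about cannot occur, since all $p_j$ with $j<k$ vanish and so the first nontrivial equation is always linear in $s$.

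This differs from the paper's argument, which is an induction on $d$ with a case split. For $v\in\langle a_{d-1},a_d\rangle$ the paper argues directly (this is your $k=d{-}1$ case, giving $|s|\le (2n)^d$); for $v\notin\langle a_{d-1},a_d\rangle$ it invokes Proposition~\ref{p:centr}(2) in $\G_{d-1}$ to see that the conjugating power of $t$ is \emph{unique}, hence the same $s$ works in $\G_d$ and in $\G_{d-1}$, and the inductive bound $B_{d-1}n^{d-1}$ applies. Your direct route is more elementary (no appeal to Proposition~\ref{p:centr} or to induction) and handles all cases at once; the paper's route yields the sharper exponent $d{-}1$ in the generic case and makes the structural reason explicit.

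One warning about your alternative inductive sketch: as written it has a genuine gap precisely in the case the paper isolates. When $\bar u=\bar v$ in $\G_{d-1}$ (equivalently $v\in\langle a_{d-1},a_d\rangle$), \emph{every} $t^s$ conjugates $\bar u$ to $\bar v$, so induction hands you $s=0$, and ``the correct $s$ with $r=0$'' need not satisfy $|s|\preceq n^{d-1}$ --- indeed for $u=a_{d-1}$ and $v=a_{d-1}a_d^{n^d}$ the only $s$ that works is $s=n^d$. So the inductive route genuinely requires that case to be handled separately, as the paper does.
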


\begin{proof}
We first consider the case $v\in\<a_{d-1}, a_d\>$. In this case, the conjugates of $v$ all have the form $va_d^{em}$ where $e$ is the exponent sum of $a_{d-1}$ in $v$, and
the unique shortest element conjugating $v$ to $va_d^{em}$ is $t^m$. Also, $d(1,va_d^{em})\le n$
implies $d(1, a_d^{em}) \le d(1, v) + d(1, va_d^{em}) \le 2n$, hence $|m|\le |em| \le  (2n)^d$ by Lemma \ref{l:normal-form}.

Next we suppose $v\in\Ad\smallsetminus\<a_{d-1}, a_d\>$. As $\<a_{d-1}, a_d\>$ and
 $\Ad$ are normal, $u$ also lies in $\Ad\smallsetminus\<a_{d-1}, a_d\>$.
 The unique shortest element conjugating $u$ to $v$ is again  a power of $t$.
 Proposition \ref{p:centr}(2) tells us  that this same power
 of $t$ is the unique shortest element  conjugating the image of $u$ to the image of $v$ in $\G_{d-1}$,
 and by induction this has length less than $B_{d-1}n^{d-1}$. 
 \end{proof}

\noindent \textbf{The generic case in Proposition \ref{p:upper}.} Continuing our proof of Proposition \ref{p:upper}, 
we now consider the case where the normal forms of $u$ and $v$ contain a non-zero power of $t$, in
other words $u, v\not\in\Ad$.
Let $\ov{u}$ and $\ov{v}$ be geodesic words representing the images of $u$ and $v$ in $\G_{d-1}$. Note that 
$|\ov{u}|\le d(1,u)$ and $|\ov{v}|\le d(1,v)$. Therefore, by induction, there is a word $w$ in the
free group on $\{t,a_1,\dots,a_{d-1}\}$ of length at most $B_{d-1}n^{d-1}$ such that $w^{-1}\ov{u}w = \ov{v}$ in $\G_{d-1}$.

Theorem \ref{t:dehn} provides an integer $A_{d-1}$  
such that 
$${\rm{Area}}_{\G_{d-1}}(w^{-1}\ov{u}w\ov{v}^{-1}) \le A_{d-1} |w^{-1}\ov{u}w\ov{v}^{-1}|^{d} \le A_{d-1}(2n +  {2} B_{d-1}n^{d-1})^d \le A_{d-1}'n^{d(d-1)},$$ 
where for convenience we define  $A_{d-1}':= 2^{2d}A_{d-1}B_{d-1}^d$. 
It follows from Lemma~\ref{l:dehn} that $w^{-1}{u}w = {v}a_d^\ell$ in $\G_{d}$ 
where $|\ell|\le  A_{d-1}'n^{d(d-1)}$.

Our inductive proof  will be complete if we can prove the following lemma. For then, 
in the case $v=\g$ with $A=A_{d-1}'$, we have $(wz)^{-1}{u}(wz) = {v}$ with 
$$
d(1,wz) \le |w| + d(1,z) \le  B_{d-1} n^{d-1}+ A_{d-1}' E_d n^d$$ and we can take 
$B_d = B_{d-1}+A_{d-1}' E_d$.

\begin{lemma}\label{l:last} For $d\ge 3$ and $A$ a positive integer, 
there is a constant $E_d$ such that whenever $\g\in\G_d\smallsetminus\Ad$ is conjugate to 
$\g a_d^{\ell}$ with $d(1,\g) \le n$ and $|\ell| \le  A\, n^{d(d-1)}$, 
then  there exists $z \in \Gamma_d$ such that
$z^{-1}\g z= \g a_d^{\ell}$ {and} $d(1,z)\le AE_d\, n^d$. 
\end{lemma}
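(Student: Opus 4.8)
\textbf{Plan for the proof of Lemma~\ref{l:last}.}
The conjugacy of $\g$ to $\g a_d^\ell$ means the image $\ov\g\in\G_{d-1}$ is centralised by the image $\ov z$ of any conjugator $z$, so the set of conjugators is a coset of $C_{\G_d}(\g)$ and its image in $\G_{d-1}$ lies in $C_{\G_{d-1}}(\ov\g)$; moreover $z^{-1}\g z=\g a_d^{\zeta_{\ov\g}(\ov z)}$, so we need $\ov z\in C_{\G_{d-1}}(\ov\g)$ with $\zeta_{\ov\g}(\ov z)=\ell$ and $d(1,z)$ small. Write $\ov\g=g_0^p a_{d-1}^*$ with $p>0$ maximal; by Proposition~\ref{p:centr}, $C_{\G_{d-1}}(\ov\g)=\<\ov g_0,a_{d-1}\>\cong\Z^2$ (the case $\ov g_0\in\Ad$, i.e.\ $\g$ having its $\G_{d-1}$-part inside $\A_{d-1}$, is handled separately as in Lemma~\ref{easy-case}). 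Let $q$ be the exponent sum of $t$ in $g_0$; then $\zeta_{\ov\g}$ kills $\<\ov\g\>$ hence is determined by its values on $\ov g_0$ and on $a_{d-1}$. One computes $\zeta_{\ov\g}(\ov g_0)=:B$ is, up to sign, a quantity of size $\simeq p\,q\cdot(\text{small})$ coming from the analogue of the Heisenberg area computation — more precisely it is controlled by how $a_d$-letters accumulate when one pushes $t$-letters past the $a_{d-1}$-part of $g_0^p$ — while $\zeta_{\ov\g}(a_{d-1})=:q'$ equals the exponent sum of $t$ in $\ov\g$, i.e.\ $q'=pq$ up to the correction from the $a_{d-1}$-tail, so in particular $q'\neq 0$. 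The first task is to make these two facts precise and to bound $|B|$ and $|q'|$ by $\preceq n$, using $d(1,\ov\g)\le n$ together with the root estimate $d(1,g_0)\preceq d(1,\ov\g)$ from Corollary~\ref{c:roots}.

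Granting $\zeta_{\ov\g}(a_{d-1})=q'\neq 0$: if $q'\mid\ell$, simply take $\ov z=a_{d-1}^{\ell/q'}$, i.e.\ $z=a_{d-1}^{\ell/q'}$ (a genuine element of $\G_d$, not just of $\G_{d-1}$, since $a_{d-1}$ commutes with itself). Then $|\ell/q'|\le|\ell|\le An^{d(d-1)}$, and by the distortion estimate for $\<a_{d-1}\><\G_d$ — which is $d(1,a_{d-1}^k)\simeq k^{1/(d-1)}$ by Corollary~\ref{c:dist-cyclics} — we get $d(1,z)\preceq (An^{d(d-1)})^{1/(d-1)}=A^{1/(d-1)}n^d\preceq A\,n^d$, as required. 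The remaining case is when $q'\nmid\ell$; here one writes $\ell=aB+bq'$ for integers $a,b$ with $\gcd$ considerations via Lemma~\ref{l:euclid}: since $e:=\gcd(B,q')$ divides $\zeta_{\ov\g}$ on all of $C_{\G_{d-1}}(\ov\g)$, solvability forces $e\mid\ell$, and Lemma~\ref{l:euclid} (applied to $B/e, q'/e$) yields a solution with $0<a<q'/e\le q'\preceq n$ and $0<b\le B/e\preceq n$. Then take $z=g_0^a a_{d-1}^b$ — again a legitimate element of $\G_d$, choosing any fixed word for $g_0$ — so that $z^{-1}\g z=\g a_d^{a\zeta_{\ov\g}(\ov g_0)+b\zeta_{\ov\g}(a_{d-1})}=\g a_d^{\ell}$. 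For the length bound, $d(1,z)\le |a|\,d(1,g_0)+|b|\,d(1,a_{d-1})\preceq n\cdot n=n^2\preceq n^d$ since $d\ge 3$. Finally, one must also worry about the regime where $|\ell|$ is genuinely as large as $An^{d(d-1)}$ and $q'\mid\ell$ but $\ell/q'$ is large: that is exactly the first subcase above, and the distortion estimate already absorbs it.

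There is a subtlety I am glossing: the conjugator must have image exactly in the right coset of $\<\ov\g\>$, and replacing $\ov z$ by $\ov z\cdot\ov\g^{\,j}$ changes $z$ by a power of $\g$ (times $a_d$-powers) without changing $\zeta_{\ov\g}(\ov z)$, so we are always free to assume $\ov z$ lies in a fixed fundamental domain; the point of invoking Lemma~\ref{l:euclid} is precisely to choose $a,b$ in such a domain so that the resulting $z$ has controlled length. The case analysis should be organised as: (i) $\ov g_0\in\Ad$, reducing to Lemma~\ref{easy-case}; (ii) $\ov g_0\notin\Ad$ and $q'\mid\ell$, handled by a power of $a_{d-1}$ with the distortion estimate; (iii) $\ov g_0\notin\Ad$ and $q'\nmid\ell$, handled by Lemma~\ref{l:euclid} and a short word $g_0^a a_{d-1}^b$. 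The main obstacle I anticipate is step (ii) together with the bookkeeping needed to show $\zeta_{\ov\g}(a_{d-1})\neq0$ and to pin down its size and the size of $\zeta_{\ov\g}(\ov g_0)$: this is where the estimate $d(1,g_0)\preceq d(1,\ov\g)$ from the analysis of roots in Section~\ref{s:roots} is essential, because a priori $g_0$ could be much shorter than $\ov\g$ (as in the Heisenberg example, where the $n$-th root of $t^na_1^n$ has length $\simeq\sqrt n$), and one must rule out pathological behaviour of $\zeta_{\ov\g}$ on the short generator $\ov g_0$.
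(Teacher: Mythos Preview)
Your overall architecture matches the paper's proof: reduce to finding $y\in C_{\G_{d-1}}(\ov\g)$ with $\zeta_{\ov\g}(y)=\ell$ and controlled length, use Proposition~\ref{p:centr} to write $C_{\G_{d-1}}(\ov\g)=\<g_0,a_{d-1}\>$, compute $\zeta_{\ov\g}$ on these two generators, and exploit the $(d-1)$-fold distortion of $\<a_{d-1}\>$ to absorb the large size of $\ell$. The paper carries this out with $\zeta_{\ov\g}(a_{d-1}^{-1})=pq$ and $\zeta_{\ov\g}(g_0)=rq$ (where $\ov\g=g_0^pa_{d-1}^r$, $0<r<p$), then writes $\ell=Mpq+\rho qe$ with $0\le\rho<p$ and $|M|\le A n^{d(d-1)}$, and takes $y=a_{d-1}^{-M}(g_0^\lambda a_{d-1}^\mu)^\rho$ where $(\lambda,\mu)$ is the small B\'ezout pair from Lemma~\ref{l:euclid}.

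There is, however, a genuine gap in your case~(iii). When $q'\nmid\ell$ you claim Lemma~\ref{l:euclid} produces integers $a,b$ with $aB+bq'=\ell$ and \emph{both} $|a|,|b|\preceq n$. That is impossible in general: Lemma~\ref{l:euclid} gives a small solution to $\lambda B-\mu q'=e$, not to $aB+bq'=\ell$. Since $|B|,|q'|\le n$ while $|\ell|$ may be as large as $An^{d(d-1)}$, any integer solution of $aB+bq'=\ell$ must satisfy $|a|+|b|\ge |\ell|/n$, so at least one of $|a|,|b|$ is of order $n^{d(d-1)-1}$. Your estimate $d(1,z)\preceq n^2$ therefore fails. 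The repair is to merge cases~(ii) and~(iii): \emph{always} first strip off a large multiple of $q'=\zeta_{\ov\g}(a_{d-1}^{\pm 1})$, writing $\ell=Mq'+\ell_0$ with $|\ell_0|<|q'|\le n$ and $|M|\le An^{d(d-1)}$; the factor $a_{d-1}^{M}$ has length $\preceq A^{1/(d-1)}n^d$ by distortion, and the small remainder $\ell_0$ (now genuinely bounded by $n$) is then realised by $g_0^a a_{d-1}^b$ with $|a|,|b|\preceq n$ via B\'ezout. This is exactly how the paper organises the final argument.

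Two smaller points. First, case~(i) is vacuous: $\g\notin\Ad$ forces a nonzero $t$-exponent, hence $\ov\g\notin\A_{d-1}$ and $g_0\notin\A_{d-1}$. Second, you have the direction of the root estimate backwards. The danger is not that $g_0$ is \emph{shorter} than $\ov\g$ (that would only help), but that it might be \emph{longer}: one needs $d(1,g_0)\preceq n$ to bound $d(1,g_0^a a_{d-1}^b)$, and Corollary~\ref{c:roots} supplies exactly this upper bound on the length of the root.
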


The proof of the lemma involves the homomorphisms  $\zeta_g: C_{\G_{d-1}}(g)\to\Z$
defined as follows.

\begin{definition} \label{d:def}[The homomorphisms $\zeta_g$]  Let $g\in\G_{d-1}$.
For each element of the centralizer $x\in C_{\G_{d-1}}(g)$ 
and all preimages $\tilde{x}, \tilde{g} \in\G_d$
we have $\tilde{x}^{-1}\tilde{g}\tilde{x} = \tilde{g} a_d^{m}$ in $\G_d$, where $m\in\Z$ is independent of the
choices of $\tilde{x}$ and $\tilde{g}$ because different choices differ by a power of $a_d$, which is central.
Define $\zeta_g(x) :=m$ and note that $\zeta_g: C_{\G_{d-1}}(g)\to \Z$ is a homomorphism that vanishes
on $\<g\>$.
\end{definition}

\begin{proof}
The elements $c\in \G_d$ that conjugate $\gamma$ to an element of the form $\gamma a_d^{\ell}$ are precisely those whose image $\ov{c}\in\G_{d-1}$ centralise the image $\ov{\g}$ of $\g$ and have  $\zeta_{\overline{\gamma}}(\overline{c})=\ell$.  
In the light of these comments,  the lemma is a consequence of the following claim.

\medskip
\noindent{\em{\underline{Claim 1:}}}
{\em If $g\in\G_{d-1}\smallsetminus\A_{d-1}$ with $d(1,g)\le n$
and $\ell \in \im \zeta_g$ with $|\ell|\le A\, n^{d(d-1)}$,  then there exists $y\in C_{\G_{d-1}}(g)$ with $d(1,y)< AE_d\, n^d$
and $\zeta_g(y) = \ell$.}
\medskip

Before proving Claim~1, we examine $\im \zeta_g$.  
From Proposition \ref{p:centr}(1) we know  that $C_{\G_{d-1}}(g) = \<g_0, a_{d-1}\>\cong\Z^2$  where 
$$g=g_0^pa_{d-1}^r$$
with $p>0$ maximal.  A count of the letters $t$ gives $p \leq n$.  
Note that by moving the $a_{d-1}$ letters into $g_0$ if necessary, we may assume $0<r<p$.

\medskip
\noindent{\em{\underline{Claim 2:}}} 
{\em If $g=g_0^pa_{d-1}^r=t^{pq}x$ with $x\in  \A_{d-1}$  
and $0<r<p$, then $\im\, \zeta_g < \Z$ is generated by
$\zeta_g(a_{d-1}^{-1})=pq$ and $\zeta_g(g_0)=rq$. Hence $\im\zeta_g = qe\Z$, where $e=\gcd (p,r)$.}
\medskip

To see the truth of Claim 2, recall that $\zeta_g(h)$ is independent of the choices
of preimages that we take in $\G_d$, so writing $x$ as a geodesic word in the letters $a_i$,
we can take  $t^{pq}x$ 
as the preimage of $g$ in $\G_d$ and calculate 
\begin{equation}\label{e:pqorigin} 
-\zeta_g(a_{d-1})  {= \zeta_g(a^{-1}_{d-1})}=pq 
\end{equation}   as follows
$$ 
a_{d-1}t^{pq}xa_{d-1}^{-1}= t^{pq}\phi_d^{pq}(a_{d-1})xa_{d-1}^{-1}=t^{pq}a_{d-1}a_d^{pq}xa_{d-1}^{-1}=t^{pq}xa_d^{pq},
$$  
and we can verify that $\zeta_g(g_0)=qr$ by fixing a preimage $\tilde{g_0}=t^qy$ with $y\in  \A_d$ 
then defining $\tilde{g}:=\tilde{g_0}^pa_{d-1}^r$ and calculating
$$
\tilde{g_0}^{-1}(\tilde{g_0}^pa_{d-1}^r)\tilde{g_0}  = \tilde{g_0}^p \tilde{g_0}^{-1}a_{d-1}^r\tilde{g_0}
=\tilde{g_0}^p {y^{-1}} \phi_d^q(a_{d-1}^r) {y} 
= \tilde{g_0}^p {y^{-1}} a_{d-1}^ra_d^{qr} {y}  = \tilde{g} a_d^{qr}.
$$ 
This proves Claim 2.

\smallskip
With Claim 2 in hand, our next goal is to find a small element of $C_{\G_{d-1}}(g)$ whose image
 generates $\im \zeta_g$. 
Lemma \ref{l:euclid} provides us with positive integers  $\lambda < p$ and $\mu \le r<p$
such that  $\lambda  r-  \mu p =e$. 
Then, for $g=g_0^pa_{d-1}^r$ we have  
\begin{equation}\label{e:short1} 
\zeta_g(g_0^{\lambda} a_{d-1}^{\mu}) = \lambda  \zeta_g(g_0) - \mu \zeta_g(a_{d-1}^{-1}) = \lambda   rq - \mu pq = qe.
\end{equation}
Corollary~\ref{c:roots}, applied to the equality $g a_{d-1}^{-r} = g_0^p$ in $\G_{d-1}$, gives us the first of the following inequalities and $r < p$ gives the second: 
\begin{equation}\label{e:corapp} 
d(1, g_0)   \leq   K_{d-1} d(1, g a_{d-1}^{-r})   \leq   K_{d-1} (d(1, g) + p).
\end{equation}   
 So  we estimate that in $\G_{d-1}$ we have 
\begin{equation}\label{e:short}
d(1, g_0^{\lambda} a_{d-1}^\mu) \le\, \lambda  \ d(1, g_0) + \mu  
  \le {\lambda K_{d-1}  \,  (d(1,g)+p)} + \mu \le p(K_{d-1} (n+p) + 1).
\end{equation} 

\medskip

\noindent{\bf{The Final Argument.}} We are now ready to prove Claim 1. 
We have $q \neq 0$ because $g \notin \A_{d-1}$.  So,  given $\ell \in\im \zeta_g$ with $|\ell|\le A\, n^{d(d-1)}$,  because $pq \in \im \zeta_g = qe\Z$,    we  can  write 
$$
\ell = Mpq + \theta qe 
$$
for integers $\theta$ and $M$  with $0\le \theta < p$ and $|M|\le A\, n^{d(d-1)}$. Thus,  using \eqref{e:pqorigin} and \eqref{e:short1}, we have  
$$
\ell = M\, \zeta_g(a_{d-1}^{-1}) + \theta\, \zeta_g(g_0^\lambda a_{d-1}^{\mu}) = \zeta_g(a_{d-1}^{-M}(g_0^\lambda a_{d-1}^{\mu})^\theta ).
$$
We are going to argue that  the term
$$y:=   a_{d-1}^{-M} (g_0^\lambda a_{d-1}^{\mu})^\theta$$   
from the last bracket satisfies  Claim 1. 

From the triangle inequality in $\G_{d-1}$ we have, invoking  {Lemma~\ref{l:real} and} estimate (\ref{e:short}) in the second line, and using $|M|\le A\, n^{d(d-1)}$  and  $p \leq n$ in the third,   
\begin{align*}
d(1,y) = d(1, a_{d-1}^{-M}(g_0^\lambda a_{d-1}^{\mu})^\theta) &\le d(1, a_{d-1}^M) + \theta\, d(1,  g_0^\lambda a_{d-1}^{\mu})\\
&\le D_{d-1}{|M|}^{1/(d-1)} + p^2(K_{{d-1}} (n+p) + 1)\\
&\le D_{d-1}A n^d + n^2 (2K_{{d-1}}  n + 1)\\
&\le AE_d n^d
\end{align*}
provided $d\ge 3$ and we choose  $E_d$ to be sufficiently larger than $D_{d-1}$ and $K_{d-1}$.    
This completes the proof of Claim 1, hence Lemma~\ref{l:last}, Proposition~\ref{p:upper}
and Theorem~\ref{t:main-poly}.
\end{proof} 
 
\bibliographystyle{alpha}
\bibliography{bibli}

\bigskip

\ni \small{Martin R.\ Bridson, Mathematical Institute, Andrew Wiles Building, Oxford OX2 6GG, United Kingdom.   {bridson@maths.ox.ac.uk},   
\href{http://people.maths.ox.ac.uk/bridson/}{people.maths.ox.ac.uk/bridson/}}

\ni \small{Timothy R.\ Riley, \rule{0mm}{6mm} 
Department of Mathematics, 310 Malott Hall,  Cornell University, Ithaca, NY 14853, USA. {tim.riley@math.cornell.edu},   
\href{https://math.cornell.edu/timothy-riley}{math.cornell.edu/timothy-riley}}

 \end{document}